\documentclass[reqno,a4paper]{amsart}
\usepackage[UKenglish]{babel}

\usepackage[monochrome]{xcolor} % the option to remove the colors from output

\usepackage{amsmath, amsthm, amssymb}
\usepackage[foot]{amsaddr}

\usepackage{enumerate}
\usepackage{algorithmic,algorithm}
\usepackage{hyperref}
\usepackage{graphicx}

%\usepackage{showlabels}
%\showlabels{bibitem}
%\usepackage{showkeys}
%\usepackage{refcheck}

%Definition for theorems, definitions etc. for English texts
\theoremstyle{theorem}
\newtheorem{theorem}{Theorem}[section]
\newtheorem{lemma}[theorem]{Lemma}
\newtheorem{corollary}[theorem]{Corollary}

\theoremstyle{definition}
\newtheorem{assumption}[theorem]{Assumption}

%==================================================
%My commands: Define your commands here:
\newcommand{\E}{\ensuremath{\mathbb{E}}}  
\newcommand{\F}{\ensuremath{\mathbb{F}}}  
\newcommand{\R}{\ensuremath{\mathbb{R}}}

\newcommand{\Prob}{\ensuremath{\mathbb{P}}}

%\newcommand{\red}[1]{\textcolor{red}{#1}}

% Title. If the supplement option is on, then "Supplementary Material"
% is automatically inserted before the title.
\title{A modified MSA for stochastic control problems}
\ifpdf
\hypersetup{
  pdftitle={A modified MSA for stochastic control problems},
  pdfauthor={B. Kerimkulov, D. \v{S}i\v{s}ka, and L. Szpruch}
}
\fi

\date{\today, }
% Authors: full names plus addresses.
\author{B. Kerimkulov$^{1,2}$}
\address{$^1$\href{http://www.maxwell.ac.uk/migsaa}{Maxwell Institute Graduate School in Analysis and Applications}}
\address{$^2$\href{https://www.maths.ed.ac.uk}{School of Mathematics, University of Edinburgh}}
\email{B.Kerimkulov@sms.ed.ac.uk}
\author{D. \v{S}i\v{s}ka$^{2,3}$}
\address{$^3$\href{https://vegaprotocol.io}{Vega Protocol}}
\email{D.Siska@ed.ac.uk}
\author{{\L}. Szpruch$^{2,4}$}
\address{$^4$\href{https://www.turing.ac.uk}{Alan Turing Institute}}
\email{L.Szpruch@ed.ac.uk}

\thanks{Supported by the Alan Turing Institute under EPSRC grant no. EP/N510129/1 and by The Maxwell Institute Graduate School in Analysis and its Applications, a Centre for Doctoral Training funded by the UK Engineering and Physical Sciences Research Council (grant EP/L016508/01), the Scottish Funding Council, Heriot-Watt University and the University of Edinburgh.}

\keywords{}

\subjclass[2010]{}

\begin{document}
\maketitle

% REQUIRED
\begin{abstract}
The classical Method of Successive Approximations (MSA) is an iterative method for solving stochastic control problems and is derived from Pontryagin's optimality principle.
It is known that the MSA may fail to converge. 
Using careful estimates for the backward stochastic differential equation (BSDE) this paper suggests a modification to the MSA algorithm. 
This modified MSA is shown to converge for general stochastic control problems with control in both the drift and diffusion coefficients. 
Under some additional assumptions the rate of convergence is shown.
The results are valid without restrictions on the time horizon of the control problem, in contrast to iterative methods based on the theory of forward-backward stochastic differential equations. 
\end{abstract}

\section{Introduction}
Stochastic control problems appear naturally in a range of applications in engineering, economics and finance. 
With the exception of very specific cases such as the linear-quadratic control problem in engineering or Merton 
portfolio optimization task in finance,  stochastic control problems typically have no closed form 
solutions and have to be solved numerically. In this work, we consider a modification to the method of successive approximations (MSA), see Algorithm~\ref{alg mmsa}. 
The MSA is essentially a way of applying the Pontryagin's optimality principle to get numerical solutions of stochastic control problems.

We will consider the continuous space, continuous time problem where 
the controlled system is modelled by an $\mathbb R^d$-valued diffusion process.
Let $W$ be a $d'$-dimensional Wiener martingale on a filtered probability space $(\Omega, \mathcal{F}, (\mathcal{F}_t)_{t\ge 0}, \Prob)$. 
We will provide exact assumptions we need in Section~\ref{sec main}.
For now, let us fix a finite time $T\in(0,\infty)$ and consider the controlled stochastic differential equation (SDE) for given measurable functions
$b:[0,T]\times\R^d\times A\to \R^d$ and $\sigma:[0,T]\times\R^d\times A\rightarrow\R^{d\times d'}$
\begin{equation}\label{sde}
dX_s=b(s,X_s,\alpha_s)\,ds+\sigma(s,X_s,\alpha_s)\,dW_s\,,\,\, s\in [0,T]\,,\,\,\, X_0 = x\,.
\end{equation}
Here $\alpha=(\alpha_s)_{s\in[0,T]}$ is a control process belonging to the space of admissible controls
$\mathcal A$, valued in a separable metric space $A$ and we will write $X^{\alpha}$ to denote the unique solution of~\eqref{sde} 
which starts from $x$ at time $0$ whilst being controlled by $\alpha$.
Furthermore let $f:[0,T]\times \mathbb R^d \times A \to \mathbb R$ and $g:\mathbb R^d \to \mathbb R$ be given measurable functions and consider the gain functional
\begin{equation}\label{problem}
J(x,\alpha):=\E\left[\int_{0}^{T}f(s,X_s^{\alpha},\alpha_s)ds+g(X_T^{\alpha})\right]
\end{equation}
for all $x\in\R^d$ and $\alpha\in\mathcal{A}$. 
We want to solve the optimisation problem i.e. to find the optimal control $\alpha^*$ which achieves the minimum of~\eqref{problem} (or, if the infimum
cannot be reached by $\alpha \in \mathcal A$ then an $\varepsilon$-optimal control $\alpha^\varepsilon \in \mathcal A$ such that $\inf_{\alpha\in\mathcal{A}}J(x,\alpha) \leq J(x,\alpha^\varepsilon) + \varepsilon$). 

In the present paper, we study an approach based on Pontryagin's optimality principle, see e.g.~\cite{boltyanskii},~\cite{pontryagin} or~\cite{carmona book}. The main idea is to consider optimality conditions for controls of the problem~\eqref{problem}. Given $b, \sigma$ and $f$ we define the Hamiltonian $\mathcal{H}:[0,T]\times\R^d\times\R^d\times\R^{d\times d'}\times A\rightarrow \R$ as
\begin{equation}\label{eq: hamiltonian}
\mathcal{H}(t,x,y,z,a)=b(t,x,a)\cdot y+\text{tr}(\sigma^\top (t,x,a)z)+f(t,x,a)\,.
\end{equation}
Consider for each $\alpha\in\mathcal{A}$, the BSDE, called the adjoint equation
\begin{equation}\label{eq: adjoint equation}
dY_s^{\alpha}=-D_x\mathcal{H}(s,X_s^{\alpha},Y_s^{\alpha},Z_s^{\alpha},\alpha_s)\,ds+Z_s^{\alpha}\,dW_s,\,\,\,Y_T^{\alpha}=D_xg(X_T^{\alpha}),\,\,\,s\in[0,T]\,.
\end{equation}
It is well known from Pontryagin's optimality principle that, if an admissible control $\alpha^*\in\mathcal{A}$ is optimal, $X^{\alpha^*}$ is the corresponding optimally controlled dynamic~\eqref{sde} and $(Y^{\alpha^*},Z^{\alpha^*})$ is the solution to the associated adjoint equation~\eqref{eq: adjoint equation}, then $\forall a\in A$ and $\forall s\in[0,T]$ the following holds
\begin{equation}
\label{eq hamlitonian pointwise ineq}
\mathcal{H}(s,X_s^{\alpha^*},Y_s^{\alpha^*},Z_s^{\alpha^*},\alpha_s^*)\le \mathcal{H}(s,X_s^{\alpha^*},Y_s^{\alpha^*},Z_s^{\alpha^*},a)\,\,\,\text{a.s.}
\end{equation}

We now define the augmented Hamiltonian $\mathcal{\tilde{H}}:[0,T]\times\R^d\times\R^d\times\R^{d\times d'}\times A\times A\rightarrow \R$ for some $\rho\ge0$ by
\begin{equation}\label{eq: augmented hamiltonian}
\begin{split}
\mathcal{\tilde{H}}&(t,x,y,z,a',a):=\mathcal{H}(t,x,y,z,a)+\frac{1}{2}\rho|b(t,x,a)-b(t,x,a')|^2\\
&\frac{1}{2}\rho|\sigma(t,x,a)-\sigma(t,x,a')|^2+\frac{1}{2}\rho\left|D_x\mathcal{H}(t,x,y,z,a)-D_x\mathcal{H}(t,x,y,z,a')\right|^2\,.
\end{split}
\end{equation}
Notice that when $\rho=0$ we have exactly the definition of Hamiltonian~\eqref{eq: hamiltonian}. 
Given the augmented Hamiltonian, let us introduce the modified MSA in Algorithm~\ref{alg mmsa} which consists of successive integrations of the state and adjoint systems and updates to the control.
Notice that the backward SDE depends on the Hamiltonian $\mathcal{H}$, while the control update step comes from minimizing the augmented Hamiltonian $\tilde{\mathcal{H}}$.

\begin{algorithm}[h!]
	\caption{Modified Method of Successive Approximations:}
	\label{alg mmsa}
	\begin{algorithmic}
		\STATE{Initialisation: make a guess of the control $\alpha^0 = (\alpha^0_s)_{s\in[0,T]}$.}
		\WHILE{difference between $J(x,\alpha^n)$ and $J(x,\alpha^{n-1})$ is large}
		\STATE{Given a control $\alpha^{n-1} = (\alpha^{n-1}_s)_{s\in[0,T]}$ solve the following forward SDE, then solve backward SDE:
			\begin{equation}
			\label{eq MMSA FBSDE}
			\begin{split}
			dX^{n}_s&=b(s,X^{n}_s,\alpha^{n-1}_s)\,ds+\sigma(s,X^{n}_s,\alpha^{n-1}_s)\,dW_s\,,\,\,\,\, X^{n}_0 = x\,,\\
			dY^{n}_s&=-D_x\mathcal{H}(s,X^{n}_s,Y^{n}_s,Z^{n}_s,\alpha^{n-1}_s)\,ds+Z^{n}_s\,dW_s,\,\,\,Y^{n}_T=D_xg(X^{n}_T)\,.
			\end{split}
			\end{equation}
		}
		\STATE{Update the control
			\begin{equation}
			\label{eq def anplus1 MMSA}
			\alpha^{n}_s \in \arg \min_{a\in A}\tilde{\mathcal{H}}(s,X^{n}_s,Y^{n}_s,Z^{n}_s,\alpha^{n-1}_s,a)\,,\,\, \forall s\in[0,T]\,.	
			\end{equation}
		}
		\ENDWHILE
		\RETURN $\alpha^{n}$.
	\end{algorithmic}
\end{algorithm}
The method of successive approximations (i.e. case $\rho=0$) for numerical solution of deterministic control problems was proposed already in~\cite{chernousko}. 
Recent application of the modified MSA to a deep learning problem has been studied in~\cite{weinan e}, where they formulated the training of deep neural networks as an optimal control problem and introduced the modified method of successive approximations as an alternative training algorithm for deep learning. 
For us, the main motivation to explore the modified MSA for stochastic control problems is to obtain convergence, ideally with rate, of an iterative algorithm, applicable to problems with the control in the diffusion part of the controlled dynamics. 
This is in contrast to~\cite{kerimkulov siska szpruch} where convergence rate of an the Bellman--Howard policy iteration is shown but only for control problems with no control in the diffusion part of the controlled dynamics.

In Lemma~\ref{lem estimate for difference of J}, which can be established using careful BSDE estimates, we can see the estimate on the change of $J$  when we do a minimization step of Hamiltonian as in~\eqref{eq def anplus1 MMSA}. 
If the sum of the last three terms of~\eqref{eq estimate for difference of J} is bigger than the first term, then for classical MSA algorithm (i.e. case $\rho=0$) we cannot guarantee that we do an update of the control in optimal descent direction of $J$. 
That means that the method of successive approximations may diverge. 
To overcome this, we need to modify the algorithm in such way so that we ensure convergence. 
With this in mind the desirability of the the augmented Hamiltonian~\eqref{eq: augmented hamiltonian} for updating the control becomes clear, as long as it still characterises optimal controls like $\mathcal H$ does. 
Theorem~\ref{thr extended pop} answers this question affirmatively which opens the way to the modified MSA.
In Theorem~\ref{thr convergence of modified MSA} we show that the modified method of successive approximations,
converges for arbitrary $T$, and in Corollary~\ref{cor mmsa rate}, we show {\color{red}logarithmic} convergence rate for certain stochastic control problems.

We observe that the forward and backward dynamics in~\eqref{eq MMSA FBSDE} are decoupled, due to the iteration used. 
Therefore, it can be efficiently approximated, even in high dimension, using deep learning methods, see~\cite{han} and~\cite{sabate}. 
However, the minimization step~\eqref{eq def anplus1 MMSA} might be computationally expensive for some problems. A possible approach circumventing this is to replace the full minimization of~\eqref{eq def anplus1 MMSA} by gradient descent. A continuous version of this gradient flow is analyzed in~\cite{siska szpruch}.

The main contributions of this paper are the probabilistic proof of convergence of the modified method of successive approximations and establishing convergence rate for a specific type of optimal control problems.
%This is an extension of the result in~\cite{chernousko} to the stochastic case. 

This paper is organised as follow: in Section~\ref{sec related work} we compare our results with existing work. 
In Section~\ref{sec main} we state the assumptions and main results. 
In Section~\ref{sec proofs} we collect all proofs.
Finally, in Appendix~\ref{sec appendix} we recall an auxiliary lemma which is needed in the proof of Corollary~\ref{cor mmsa rate}.

\subsection{Related work}\label{sec related work} One can solve the stochastic optimal control problem using dynamic programming principle. 
It is well known, see e.g. Krylov~\cite{krylov controlled}, that under reasonable assumptions the value function, defined as infimum of~\eqref{problem} over all admissible controls, satisfies the Bellman partial differential equation (PDE).
There are several approaches to solve this nonlinear problem. One may apply a finite difference method to discretise the Bellman PDE and get a high dimensional nonlinear system of equations, see e.g~\cite{dong krylov rate} or~\cite{gyongy siska finite}. Or one may linearize the Bellman PDE and then iterate. The classical approach is the Bellman-Howard policy improvement / iteration algorithm, see e.g.~\cite{Bellman},~\cite{Bellman:1957} or~\cite{howard:dp}. The algorithm is initialised with a ``guess" of Markovian control. 
Given a Markovian control strategy at step $n$ one solves a linear PDE with the given control fixed and then one uses the solution to the linear PDE to update the Markovian control, see e.g.~\cite{jacka miatovic policy},~\cite{jacka miatovic siraj coupling} or ~\cite{maeda jacka evaluation}. 
In~\cite{kerimkulov siska szpruch}, a global rate of convergence and stability for the policy iteration algorithm has been established using backward stochastic differential equations (BSDE) theory. However, the result only applies to stochastic control problems with no control in the diffusion coefficient of the controlled dynamics. 

It is known that the solution of the stochastic optimal control problem can be obtained from a corresponding forward backward stochastic differential equation (FBSDE) via the stochastic optimality principle, see~\cite[Chapter 8.1]{zhang book}. Indeed, let us consider~\eqref{sde} and~\eqref{eq: adjoint equation}, and recall from the stochastic optimality principle, see~\cite[Theorem 4.12]{carmona book}, that for the optimal control $\alpha^*=(\alpha_s^*)_{s\in[0,T]}$ we have that~\eqref{eq hamlitonian pointwise ineq} holds.
%\begin{equation}
%\mathcal{H}(s,X_s^{\alpha^*},Y_s^{\alpha^*},Z_s^{\alpha^*},\alpha_s^*)\le \mathcal{H}(s,X_s^{\alpha^*},Y_s^{\alpha^*},Z_s^{\alpha^*},a)\,\,\,\text{a.s.},\,\,\,\,\forall a\in A,\, \forall s\in[0,T]
%\end{equation}
Assume that under some conditions on $b,\sigma$ and $f$ we have that the first order condition stated above uniquely determines $\alpha^*$ for $s\in[0,T]$ by
\begin{equation}\label{eq alpha from phi}
\alpha_s^*=\varphi(s,X_s^{\alpha^*},Y_s^{\alpha^*},Z_s^{\alpha^*})\,,
\end{equation}
for some function $\varphi$. Therefore, after plugging~\eqref{eq alpha from phi} into~\eqref{sde} and~\eqref{eq: adjoint equation}, we obtain the following coupled FBSDE:
\begin{equation}\label{eq FBSDE from control problem}
\begin{split}
dX_s^{\alpha^*}&=\bar{b}(s,X_s^{\alpha^*},Y_s^{\alpha^*},Z_s^{\alpha^*})\,ds+\bar{\sigma}(s,X_s^{\alpha^*},Y_s^{\alpha^*},Z_s^{\alpha^*})\,dW_s\,,\,\, s\in [0,T]\,,\,\,\, X_0^{\alpha^*} = x\,.\\
dY_s^{\alpha^*}&=-D_x\bar{\mathcal{H}}(s,X_s^{\alpha^*},Y_s^{\alpha^*},Z_s^{\alpha^*})\,ds+Z_s^{\alpha^*}\,dW_s,\,\,\,Y_T=D_xg(X_T^{\alpha^*}),\,\,\,s\in[0,T]\,,
\end{split}
\end{equation}
where $(\bar{b},\bar{\sigma})(s,X_s^{\alpha^*},Y_s^{\alpha^*},Z_s^{\alpha^*})=(b,\sigma)(s,X_s^{\alpha^*},\varphi(s,X_s^{\alpha^*},Y_s^{\alpha^*},Z_s^{\alpha^*}))$ and\\ $\bar{\mathcal{H}}(s,X_s^{\alpha^*},Y_s^{\alpha^*},Z_s^{\alpha^*})=\mathcal{H}(s,X_s^{\alpha^*},Y_s^{\alpha^*},Z_s^{\alpha^*},\varphi(s,X_s^{\alpha^*},Y_s^{\alpha^*},Z_s^{\alpha^*}))$. It is worth mentioning that when $\sigma$ does not depend on the control $\bar{\sigma}$ will depend on forward process and time only. This means that $\bar{\sigma}$ does not have $Y$ and $Z$ components.

The theory of FBSDE has been studied widely and there are several methods to show the existence and uniqueness result, and a number of numerical algorithms have been proposed based on those methods. First is the method of contraction mapping.
It was first studied by Antonelli~\cite{antonelli} and later by Pardoux and Tang~\cite{pardoux}. The main idea there is to show that a certain map is a contraction, and then to apply a fixed point argument. However, it turns out that this method works only for small enough time horizon $T$. In the case when $\bar{\sigma}$ does not depend on $Y$ and $Z$, having small $T$ is sufficient to get contraction. Otherwise, one needs to assume additionally that the Lipschitz constants of $\bar{\sigma}$ in $z$ and that of $g$ in $x$ satisfy a certain inequality, see~\cite[Theorem 8.2.1]{zhang book}. Using the method of contraction mapping one can then implement a Picard-iteration-type numerical algorithm and show exponential convergence for small $T$. The second method is the Four Step Scheme. It was introduced by Ma, Protter and Yong, see~\cite{ma}, and was later studied by Delarue~\cite{delarue}. The idea is to use a decoupling function and then study an associated quasi-linear PDE.  We note that in~\cite{ma,delarue} the forward diffusion coefficient $\bar{\sigma}$ does not depend on $Z$. This corresponds to stochastic control problems with the uncontrolled diffusion coefficient. The numerical algorithms based on this method exploits the numerical solution of the associated quasi-linear PDE and therefore faces some limitations for high dimensional problems, see Douglas, Ma and Protter~\cite{douglas}, Milstein
and Tretyakov~\cite{milstein}, Ma, Shen and Zhao~\cite{shen} and Delarue and Menozzi~\cite{menozzi}. Guo, Zhang and Zhuo~\cite{guo} proposed a numerical scheme for high-dimensional quasi-linear PDE associated with the coupled FBSDE when $\bar{\sigma}$ does not depend on $Z$, which is based on a monotone scheme and on probabilistic approach. Finally, there is the method of continuation. This method was developed by Hu and Peng~\cite{hu},  Peng and Wu~\cite{peng} and  by Yong~\cite{yong}. It allows them to show the existence and uniqueness result for arbitrary $T$ under monotonicity conditions on the coefficients, which one would not expect to apply to FBSDEs arising from a control problem as described by~\eqref{eq alpha from phi},~\eqref{eq FBSDE from control problem}. Recently, deep learning methods have been applied to solving FBSDEs. In~\cite{shaolin}, three algorithms for solving fully coupled FBSDEs which have good accuracy and performance for high-dimensional problems are provided. One of the algorithms is based on the Picard iteration and it converges, but only for small enough $T$. 
{\color{red}Such method for solving high-dimensional FBSDEs has also been proposed in~\cite{han long}}.

\section{Main results}
\label{sec main}
We fix a finite horizon $T\in (0,\infty)$.
Let $A$ be a separable metric space.
This is the space where the control processes $\alpha$ take values. 
We fix a filtered probability space $(\Omega, \mathcal{F}, \F=(\mathcal{F}_t)_{0\le t\le T}, \Prob)$.
Let 
$W=(W_t)_{t\in [0,T]}$	
be a $d'$-dimensional Wiener martingale on this space. {\color{red}
By $\E_t$ we denote the conditional expectation with respect to $\mathcal{F}_t$. Let $|\cdot|$ denote any norm in a finite dimensional Euclidean space. By $\|\cdot\|_{L^\infty}$ we denote the norm in $L^{\infty}(\Omega)$. 
Let $\|Z\|_{\mathbb{H}^{\infty}}:=\text{ess}\sup_{(t,\omega)}|Z_t(\omega)|$ for any predictable process $Z$. We understand the following as $D_x\sigma=D_{x_l}\sigma^{ij}$,  $D_x^2b=D^2_{x_lx_n}b^i$ and $D_x^2\sigma=D^2_{x_lx_n}\sigma^{ij}$, where $i,l,n=1,2,\dots,d$ and $j=1,2,\dots,d'$. By $Z^\top$ we denote the transpose of $Z$.}
The state of the system is governed by the controlled SDE~\eqref{sde}\,. The corresponding adjoint equation satisfies~\eqref{eq: adjoint equation}.
\begin{assumption}
	\label{assumption controlled SDE for modified MSA}
	The functions $b$ and $\sigma$ are jointly continuous in $t$ and twice differentiable in $x$.
	There exists $K\ge 0$ 
	such that $\forall x\in\R^d,\forall a\in A,\forall t\in[0,T]$,
	\begin{equation}\label{eq derivative bounds}
	|D_x b(t,x,a)|+|D_x\sigma(t,x,a)|+|D^2_x b(t,x,a)|\le K\,.
	\end{equation}
	{\color{red}Moreover, assume that $D_x^2\sigma(t,x,a)=0$ $\forall x\in\R^d,\forall a\in A,\forall t\in[0,T]$.}
\end{assumption}
{\color{red} 
Clearly the assumption~\eqref{eq lipschitz cont} implies that $\forall x,x'\in\R^d,\forall a\in A,\forall t\in[0,T]$ we have
\begin{equation}\label{eq lipschitz cont}
|b(t,x,a)-b(t,x',a)|+|\sigma(t,x,a)-\sigma(t,x',a)|\le K|x-x'|\,.
\end{equation} 
The assumption that $D_x^2\sigma(t,x,a)=0$ $\forall x\in\R^d,\forall a\in A,\forall t\in[0,T]$ is needed so that~\eqref{eq D_x^2 H is bounded}, in the proof of Lemma~\ref{lem Y Z bounded}, holds. 
Without this assumption~\eqref{eq D_x^2 H is bounded} would only hold if we could show that $\|Z^\alpha\|_{\mathbb H^\infty} < \infty$.  
Without additional regularization of the control problem this is impossible. 
Indeed, with~\cite[Proposition 5.3]{elkaroui peng quenez backward} we see that $Z^\alpha_t$ is a version of $D_t Y_t^\alpha$ (the Malliavin derivative of $Y_t^\alpha$) and $D_t Y_t^\alpha$ itself satisfies an a linear BSDE. 
However, to obtain the estimates using this representation, one term that arises is $D_t \alpha_s$ where $t\in [0,T]$ and $s\in [t,T]$.
So we would need $\text{ess}\sup_{\omega\in \Omega, t\in (0,T),s\in(t,T)}|D_t \alpha_s(\omega)|<\infty $.
This is not necessarily the case here. 
}

\begin{assumption}\label{ass second order derivative of Hamiltonian bounded}
	The functions $f$ is joinly continuous in $t$, and $f$ and $\sigma$ are  twice differentiable in $x$.
	There is a constant $K\ge0$ such that  $\forall x,\forall a\in A,\forall t\in[0,T]$
	\begin{equation}
	|D_x g(x)|+|D_xf(t,x,a)|+|D^2_x g(x)|+|D_x^2f(t,x,a)|\le K\,.
	\end{equation}
\end{assumption}
Under these assumptions, we can obtain the following estimate.	
\begin{lemma}\label{lem estimate for difference of J}
	Let Assumption~\ref{assumption controlled SDE for modified MSA} and~\ref{ass second order derivative of Hamiltonian bounded} hold. Then for any admissible controls $\varphi$ and $\theta$ there exists a constant $C>0$ such that
	\begin{equation}\label{eq estimate for difference of J}
	\begin{split}
	J(x,\varphi)&-J(x,\theta)\le\E\int_0^T[\mathcal{H}(s,X^\theta_s,Y^\theta_s,Z^\theta_s,\varphi_s)-\mathcal{H}(s,X^\theta_s,Y^\theta_s,Z^\theta_s,\theta_s)]\,ds\\
	&+C\E\int_0^T|b(s,X^\theta_s,\varphi_s)-b(s,X^\theta_s,\theta_s)|^2\,ds\\
	&+C\E\int_0^T|\sigma(s,X^\theta_s,\varphi_s)-\sigma(s,X^\theta_s,\theta_s)|^2\,ds\\
	&+C\E\int_0^T|D_x\mathcal{H}(s,X^\theta_s,Y^\theta_s,Z^\theta_s,\varphi_s)-D_x\mathcal{H}(s,X^\theta_s,Y^\theta_s,Z^\theta_s,\theta_s)|^2\,ds\,.
	\end{split}
	\end{equation}
\end{lemma}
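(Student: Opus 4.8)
The plan is to compare the two gain functionals by integrating the adjoint process associated with $\theta$ against the difference of the two controlled trajectories. Set $\Delta X_s := X_s^\varphi - X_s^\theta$, so that $\Delta X_0 = 0$ and
\begin{equation*}
d\Delta X_s = [b(s,X_s^\varphi,\varphi_s) - b(s,X_s^\theta,\theta_s)]\,ds + [\sigma(s,X_s^\varphi,\varphi_s) - \sigma(s,X_s^\theta,\theta_s)]\,dW_s\,.
\end{equation*}
I would first split $J(x,\varphi) - J(x,\theta)$ into its terminal part $\E[g(X_T^\varphi) - g(X_T^\theta)]$ and its running part $\E\int_0^T[f(s,X_s^\varphi,\varphi_s) - f(s,X_s^\theta,\theta_s)]\,ds$. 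A second-order Taylor expansion of $g$ about $X_T^\theta$, together with $|D_x^2 g|\le K$ from Assumption~\ref{ass second order derivative of Hamiltonian bounded}, gives the upper bound $\E[g(X_T^\varphi) - g(X_T^\theta)] \le \E[D_x g(X_T^\theta)\cdot\Delta X_T] + \tfrac K2\E|\Delta X_T|^2$.

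Next I would apply Itô's formula to $s\mapsto Y_s^\theta\cdot\Delta X_s$ and take expectations; under the standard square-integrability of $(X^\varphi,X^\theta,Y^\theta,Z^\theta)$ the stochastic integrals have zero mean, and using the terminal condition $Y_T^\theta = D_x g(X_T^\theta)$ together with the adjoint dynamics~\eqref{eq: adjoint equation} this yields
\begin{equation*}
\begin{split}
\E[D_x g(X_T^\theta)\cdot \Delta X_T]
&= \E\int_0^T Y_s^\theta\cdot[b(s,X_s^\varphi,\varphi_s) - b(s,X_s^\theta,\theta_s)]\,ds \\
&\quad + \E\int_0^T \text{tr}([\sigma(s,X_s^\varphi,\varphi_s) - \sigma(s,X_s^\theta,\theta_s)]^\top Z_s^\theta)\,ds \\
&\quad - \E\int_0^T \Delta X_s\cdot D_x\mathcal{H}(s,X_s^\theta,Y_s^\theta,Z_s^\theta,\theta_s)\,ds\,.
\end{split}
\end{equation*}
The key algebraic manoeuvre is then to split each increment of $b$, $\sigma$ and $f$ into a pure control increment evaluated at the frozen state $X_s^\theta$ and a pure state increment evaluated at the frozen control $\varphi_s$. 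Collecting the three control increments (of $b\cdot Y^\theta$, $\text{tr}(\sigma^\top Z^\theta)$ and $f$) reconstitutes exactly $\mathcal{H}(s,X_s^\theta,Y_s^\theta,Z_s^\theta,\varphi_s) - \mathcal{H}(s,X_s^\theta,Y_s^\theta,Z_s^\theta,\theta_s)$, the leading term of~\eqref{eq estimate for difference of J}; collecting the three state increments reconstitutes $\mathcal{H}(s,X_s^\varphi,Y_s^\theta,Z_s^\theta,\varphi_s) - \mathcal{H}(s,X_s^\theta,Y_s^\theta,Z_s^\theta,\varphi_s)$.

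I would Taylor-expand this last increment in $x$ about $X_s^\theta$, writing it as $D_x\mathcal{H}(s,X_s^\theta,Y_s^\theta,Z_s^\theta,\varphi_s)\cdot\Delta X_s$ plus a remainder bounded by $\tfrac12\|D_x^2\mathcal{H}\|_{L^\infty}|\Delta X_s|^2$. Combining the linear part with the term $-\Delta X_s\cdot D_x\mathcal{H}(s,X_s^\theta,Y_s^\theta,Z_s^\theta,\theta_s)$ already present gives $\Delta X_s\cdot[D_x\mathcal{H}(s,X_s^\theta,Y_s^\theta,Z_s^\theta,\varphi_s) - D_x\mathcal{H}(s,X_s^\theta,Y_s^\theta,Z_s^\theta,\theta_s)]$, to which Young's inequality applies: the squared-norm half is precisely the fourth term of~\eqref{eq estimate for difference of J}, while the $\tfrac12|\Delta X_s|^2$ half joins the other quadratic remainders. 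Finally, every $\E|\Delta X_T|^2$ and $\E\int_0^T|\Delta X_s|^2\,ds$ is absorbed by the forward SDE stability estimate: splitting each coefficient increment into a part that is Lipschitz in $x$ (bounded by $K|\Delta X_s|$ via~\eqref{eq lipschitz cont}) and a control-only part, Itô's isometry and Gronwall's lemma give $\E\sup_{s\le T}|\Delta X_s|^2 \le C\E\int_0^T(|b(s,X_s^\theta,\varphi_s) - b(s,X_s^\theta,\theta_s)|^2 + |\sigma(s,X_s^\theta,\varphi_s) - \sigma(s,X_s^\theta,\theta_s)|^2)\,ds$, producing the second and third terms.

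The main obstacle is controlling the Taylor remainder of the Hamiltonian, i.e. guaranteeing $\|D_x^2\mathcal{H}\|_{L^\infty} < \infty$. Since $D_x^2\mathcal{H} = (D_x^2 b)\cdot Y^\theta + (D_x^2\sigma)\cdot Z^\theta + D_x^2 f$, the bounds on $D_x^2 b$ and $D_x^2 f$ alone do not suffice: I need $Y^\theta$ to be essentially bounded, which is exactly the content of Lemma~\ref{lem Y Z bounded}, and I need the $Z^\theta$ contribution to vanish, which is why Assumption~\ref{assumption controlled SDE for modified MSA} imposes $D_x^2\sigma\equiv 0$ --- recall that $Z^\theta$ is in general not in $\mathbb{H}^\infty$. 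Once these two facts are in hand, the remainder of the argument is a routine combination of Itô's formula, Taylor's theorem, and the forward SDE moment estimates.
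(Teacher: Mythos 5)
Your proposal is correct and follows essentially the same route as the paper's proof: Taylor expansion of $g$, It\^o's product rule applied to $Y^\theta\cdot(X^\varphi-X^\theta)$, reconstitution of the Hamiltonian difference, a second-order Taylor expansion of $\mathcal{H}$ in $x$ whose remainder is controlled via Lemma~\ref{lem Y Z bounded} and the assumption $D_x^2\sigma\equiv 0$, Young's inequality, and the Gronwall/BDG stability estimate for $\E\sup_s|X^\varphi_s-X^\theta_s|^2$. You also correctly identify the boundedness of $D_x^2\mathcal{H}$ as the one non-routine point and resolve it exactly as the paper does.
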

The proof will be given in Section~\ref{sec proofs}. We now state a necessary condition for optimality for the augmented Hamiltonian.
\begin{theorem}[Extended Pontryagin's optimality principle]\label{thr extended pop}
	Let $\alpha^*$ be the (locally) optimal control, $X^{\alpha^*}$ be the associated controlled state solving~\eqref{sde}, and $(Y^{\alpha^*},Z^{\alpha^*})$ be the associated adjoint processes solving~\eqref{eq: adjoint equation}. Then for any $a\in A$ we have
	\begin{equation}\label{eq extended pontryagin}
	\tilde{\mathcal{H}}(s,X_s^{\alpha^*},Y_s^{\alpha^*},Z_s^{\alpha^*},\alpha_s^*,\alpha_s^*)\le \tilde{\mathcal{H}}(t,X_s^{\alpha^*},Y_s^{\alpha^*},Z_s^{\alpha^*},\alpha_s^*,a)\,,\,\,\,\forall s\in[0,T]\,.
	\end{equation}
\end{theorem}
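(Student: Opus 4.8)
The plan is to deduce \eqref{eq extended pontryagin} directly from the classical Pontryagin inequality \eqref{eq hamlitonian pointwise ineq}, using only two structural features of the augmented Hamiltonian \eqref{eq: augmented hamiltonian}: that the three quadratic penalty terms are nonnegative (since $\rho\ge0$), and that they vanish identically on the diagonal $a=a'$. In effect the whole content of the statement is that the augmentation leaves the diagonal value untouched while only raising off-diagonal values, so the pointwise minimizer of $\mathcal{H}$ remains a minimizer of $a\mapsto\tilde{\mathcal{H}}(\cdot,\alpha_s^*,a)$.

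Writing $\Xi:=(s,X_s^{\alpha^*},Y_s^{\alpha^*},Z_s^{\alpha^*})$ for brevity, I would first evaluate the left-hand side of \eqref{eq extended pontryagin}. Taking $a'=a=\alpha_s^*$ in \eqref{eq: augmented hamiltonian}, each penalty compares a quantity with itself and is therefore zero, so
\begin{equation*}
\tilde{\mathcal{H}}(\Xi,\alpha_s^*,\alpha_s^*)=\mathcal{H}(\Xi,\alpha_s^*)\,.
\end{equation*}
Next, for arbitrary $a\in A$ and $a'=\alpha_s^*$, the definition \eqref{eq: augmented hamiltonian} gives
\begin{equation*}
\tilde{\mathcal{H}}(\Xi,\alpha_s^*,a)=\mathcal{H}(\Xi,a)+P_s(a)\,,\qquad P_s(a)\ge0\,,
\end{equation*}
where $P_s(a)$ collects the three $\tfrac12\rho|\cdot|^2$ terms and is nonnegative because $\rho\ge0$. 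Chaining these two identities with the classical inequality \eqref{eq hamlitonian pointwise ineq}, which holds almost surely for every $a\in A$ and $s\in[0,T]$ by the stochastic optimality principle, yields
\begin{equation*}
\tilde{\mathcal{H}}(\Xi,\alpha_s^*,\alpha_s^*)=\mathcal{H}(\Xi,\alpha_s^*)\le\mathcal{H}(\Xi,a)\le\mathcal{H}(\Xi,a)+P_s(a)=\tilde{\mathcal{H}}(\Xi,\alpha_s^*,a)\,,
\end{equation*}
which is exactly \eqref{eq extended pontryagin}.

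The argument is essentially immediate once the penalties are recognized as nonnegative, so there is no serious computational obstacle here; the only point that genuinely requires care is the hypothesis that $\alpha^*$ is merely \emph{locally} optimal rather than globally optimal. What must be justified is that the pointwise inequality \eqref{eq hamlitonian pointwise ineq} still holds in that case. This is fine, because Pontryagin's principle is a first-order necessary condition derived from spike (needle) variations: perturbing $\alpha^*$ on a time set of vanishing measure produces an arbitrarily small perturbation of the control, so local optimality already forces the first variation of $J$ to be nonnegative, which is precisely what yields \eqref{eq hamlitonian pointwise ineq}. Thus global optimality is not needed, and I would simply invoke \eqref{eq hamlitonian pointwise ineq} as granted by the cited optimality principle and conclude as above.
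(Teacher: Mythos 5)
Your proof is correct and follows essentially the same route as the paper: the penalty terms vanish on the diagonal $a=\alpha_s^*$, are nonnegative for $\rho\ge 0$, and the claim then follows by chaining with the classical pointwise inequality \eqref{eq hamlitonian pointwise ineq}, which the paper likewise invokes (citing the standard optimality principle) for the locally optimal control. Your additional remark on why spike variations make local optimality sufficient is a welcome elaboration of a point the paper leaves to the citation, but it does not change the argument.
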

The proof of Theorem~\ref{thr extended pop} will come in Section~\ref{sec proofs}. We are now ready to present the main result of the paper.
\begin{theorem}\label{thr convergence of modified MSA}
	Let Assumptions~\ref{assumption controlled SDE for modified MSA} and~\ref{ass second order derivative of Hamiltonian bounded} hold. Then Algorithm~\ref{alg mmsa} converges to {\color{red}a local minimum of~\eqref{problem} for sufficiently large $\rho>0$}. 
\end{theorem}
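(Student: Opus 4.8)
The plan is to run a monotone descent argument: show that each control update in Algorithm~\ref{alg mmsa} decreases the gain $J$, with the decrease quantitatively bounded below by the quadratic mismatch between consecutive controls, and then conclude that this mismatch must vanish, leaving a control satisfying the necessary optimality condition of Theorem~\ref{thr extended pop}. First I would apply Lemma~\ref{lem estimate for difference of J} with $\varphi=\alpha^n$ and $\theta=\alpha^{n-1}$, observing that the processes $(X^n,Y^n,Z^n)$ produced by~\eqref{eq MMSA FBSDE} are exactly $(X^{\alpha^{n-1}},Y^{\alpha^{n-1}},Z^{\alpha^{n-1}})$. Writing
\[
\delta^n_s:=|b(s,X^n_s,\alpha^n_s)-b(s,X^n_s,\alpha^{n-1}_s)|^2+|\sigma(s,X^n_s,\alpha^n_s)-\sigma(s,X^n_s,\alpha^{n-1}_s)|^2+|D_x\mathcal H(s,X^n_s,Y^n_s,Z^n_s,\alpha^n_s)-D_x\mathcal H(s,X^n_s,Y^n_s,Z^n_s,\alpha^{n-1}_s)|^2,
\]
the lemma reads $J(x,\alpha^n)-J(x,\alpha^{n-1})\le \E\int_0^T[\mathcal H(s,X^n_s,Y^n_s,Z^n_s,\alpha^n_s)-\mathcal H(s,X^n_s,Y^n_s,Z^n_s,\alpha^{n-1}_s)]\,ds+C\,\E\int_0^T\delta^n_s\,ds$.

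Next I would rewrite the right-hand side through the augmented Hamiltonian. By definition~\eqref{eq: augmented hamiltonian}, $\tilde{\mathcal H}(s,X^n_s,Y^n_s,Z^n_s,\alpha^{n-1}_s,\alpha^n_s)=\mathcal H(s,X^n_s,Y^n_s,Z^n_s,\alpha^n_s)+\tfrac12\rho\,\delta^n_s$, while $\tilde{\mathcal H}(s,X^n_s,Y^n_s,Z^n_s,\alpha^{n-1}_s,\alpha^{n-1}_s)=\mathcal H(s,X^n_s,Y^n_s,Z^n_s,\alpha^{n-1}_s)$ because the quadratic penalties vanish when the two control arguments coincide. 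Substituting gives
\[
J(x,\alpha^n)-J(x,\alpha^{n-1})\le \E\int_0^T\big[\tilde{\mathcal H}(s,X^n_s,Y^n_s,Z^n_s,\alpha^{n-1}_s,\alpha^n_s)-\tilde{\mathcal H}(s,X^n_s,Y^n_s,Z^n_s,\alpha^{n-1}_s,\alpha^{n-1}_s)\big]\,ds-\Big(\tfrac12\rho-C\Big)\E\int_0^T\delta^n_s\,ds.
\]
The update rule~\eqref{eq def anplus1 MMSA} makes $\alpha^n_s$ a pointwise minimiser of $a\mapsto\tilde{\mathcal H}(s,X^n_s,Y^n_s,Z^n_s,\alpha^{n-1}_s,a)$, so the first integrand is nonpositive. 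Choosing $\rho>2C$ then yields $J(x,\alpha^n)-J(x,\alpha^{n-1})\le-(\tfrac12\rho-C)\E\int_0^T\delta^n_s\,ds\le0$, i.e. monotone descent controlled by $\delta^n$.

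Finally I would extract convergence. Since $(J(x,\alpha^n))_n$ is non-increasing and (one argues) bounded below, it converges; telescoping the last inequality over $n$ gives $(\tfrac12\rho-C)\sum_n\E\int_0^T\delta^n_s\,ds\le J(x,\alpha^0)-\lim_n J(x,\alpha^n)<\infty$, whence $\E\int_0^T\delta^n_s\,ds\to0$. Thus the per-step change induced in $b$, $\sigma$ and $D_x\mathcal H$ vanishes, so the iteration stabilises at a control for which the minimisation~\eqref{eq def anplus1 MMSA} no longer moves $b,\sigma,D_x\mathcal H$; by Theorem~\ref{thr extended pop} such a limit satisfies the extended Pontryagin inequality~\eqref{eq extended pontryagin}, the necessary condition characterising a local minimiser of~\eqref{problem}.

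The main obstacle I expect is twofold. The absorption step needs the single constant $C$ of Lemma~\ref{lem estimate for difference of J} to dominate all three quadratic terms simultaneously and to be independent of $\rho$ and $n$, which is what fixes the threshold $\rho>2C$ and is exactly where ``sufficiently large $\rho$'' enters. More delicate is upgrading $\E\int_0^T\delta^n_s\,ds\to0$ into a genuine statement about a limiting control, since $\delta^n$ is measured along the moving processes $(X^n,Y^n,Z^n)$ and still refers to $\alpha^{n-1}$; this requires stability (continuous dependence on the control) of the decoupled FBSDE~\eqref{eq MMSA FBSDE} together with a subsequential argument, and the ``local'' qualifier is unavoidable because $\tilde{\mathcal H}$ encodes only a first-order necessary condition rather than global optimality. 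Justifying that $J$ remains bounded below along the iterates also deserves a short remark.
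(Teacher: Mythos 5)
Your argument is essentially the paper's own proof: apply Lemma~\ref{lem estimate for difference of J} with $\varphi=\alpha^n$, $\theta=\alpha^{n-1}$, use the pointwise minimising property of $\tilde{\mathcal H}$ from~\eqref{eq def anplus1 MMSA} to absorb the three quadratic penalty terms once $\rho>2C$, and telescope the resulting descent inequality against the lower bound $\inf_{\alpha\in\mathcal A}J(x,\alpha)$. The only differences are bookkeeping and candour: the paper sends $\mu(\alpha^{n-1})=\E\int_0^T\Delta_{\alpha^{n-1}}\mathcal H(s)\,ds$ to zero rather than your $\E\int_0^T\delta^n_s\,ds$ (the former implies the latter, since $\tfrac{\rho}{2}\E\int_0^T\delta^n_s\,ds\le-\mu(\alpha^{n-1})$, and is what the rate corollary later reuses), and it simply stops there, never attempting the passage to a limiting control whose delicacy you rightly flag.
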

Theorem~\ref{thr convergence of modified MSA} will be proved in Section~\ref{sec proofs}. 
{\color{red}It can be seen from the proof that $\rho$ needs to be two times larger than the constant appearing in Lemma~\ref{lem estimate for difference of J}, which itself depends increases with $T, d$ and constants from Assumption~\ref{assumption controlled SDE for modified MSA} and~\ref{ass second order derivative of Hamiltonian bounded}.} 

{\color{red}We cannot guarantee that the Algorithm~\ref{alg mmsa} converges
to the optimal control which minimizes~\eqref{problem}, since the extended Pontryagin's optimality principle, see Theorem~\ref{thr extended pop}, is the necessary condition for optimality.
The sufficient condition for optimality tells us that to get the optimal control we
need to assume convexity of the Hamiltonian in state and control variables, and need to assume convexity of the terminal cost function. To that end, we need to assume convexity of $b,\sigma, f$ and $g$ in $x$ and $a$.
}

In the following corollary, we show that under a particular setting of the problem we have {\color{red}logarithmic} convergence of the modified method of successive approximations to the true solution of the problem.

\begin{corollary}\label{cor mmsa rate}
	Let Assumptions~\ref{assumption controlled SDE for modified MSA} and~\ref{ass second order derivative of Hamiltonian bounded} hold. Moreover, if $b,\sigma$ and $f$ are in the form of
	\begin{equation*}
	\begin{split}
	&b(t,x,a)=b_1(t)x+b_2(t,a)\,,\\
	&\sigma(t,x,a)=\sigma_1(t)x+\sigma_2(t,a)\,,\\
	&f(t,x,a)=f_1(t,x)+f_2(t,a)\,
	\end{split}
	\end{equation*}
	for $\forall t\in[0,T]\,,\,\forall x\in\R^d\,,\,\forall a\in A$. In addition, assume that $f$ and $g$ are convex in $x$, {\color{red}$f_2, b_2,\sigma_2$ are convex in $a$}. Then we have the following estimate for the sequence $(\alpha^n)_{n\in\mathbb{N}}$ from Algorithm~\ref{alg mmsa}:
	\begin{equation*}
	0\le J(x,\alpha^n)-J(x,\alpha^*)\le \frac{C}{n}\,,
	\end{equation*}
	where $\alpha^*$ is the optimal control for~\eqref{problem} and $C$ is a positive constant.
\end{corollary}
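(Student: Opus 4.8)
The plan is to turn the monotone decrease of $J$ along the iterates, already available from the proof of Theorem~\ref{thr convergence of modified MSA}, into a quadratic recursion for the optimality gap $e_n:=J(x,\alpha^n)-J(x,\alpha^*)$, and then invoke the auxiliary lemma of Appendix~\ref{sec appendix}. First, under the stated convexity of $f,g$ in $x$ and of $b_2,\sigma_2,f_2$ in $a$ the Hamiltonian is convex in $(x,a)$, so the necessary condition of Theorem~\ref{thr extended pop} is also sufficient; hence $\alpha^*$ is a genuine (global) minimiser, $J(x,\alpha^*)=\inf_\alpha J(x,\alpha)$, and $e_n\ge0$, which is the left-hand inequality. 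A convenient simplification afforded by the affine-in-$x$ structure is that $a\mapsto D_x\mathcal H(s,x,y,z,a)$ is constant (since $D_xb=b_1(t)$, $D_x\sigma=\sigma_1(t)$, $D_xf=D_xf_1(t,x)$ are control-free), so the third penalty in $\tilde{\mathcal H}$ vanishes and the remaining two involve only $b_2$ and $\sigma_2$. Applying Lemma~\ref{lem estimate for difference of J} with $\varphi=\alpha^n$, $\theta=\alpha^{n-1}$, taking $\rho\ge2C$ as in Theorem~\ref{thr convergence of modified MSA}, and using that $\alpha^n$ minimises $\tilde{\mathcal H}(s,X^n_s,Y^n_s,Z^n_s,\alpha^{n-1}_s,\cdot)$ yields the per-step bound
\begin{equation*}
e_{n-1}-e_n\ge\delta_n:=\E\int_0^T\big[\tilde{\mathcal H}(s,X^n_s,Y^n_s,Z^n_s,\alpha^{n-1}_s,\alpha^{n-1}_s)-\tilde{\mathcal H}(s,X^n_s,Y^n_s,Z^n_s,\alpha^{n-1}_s,\alpha^n_s)\big]\,ds\ge0.
\end{equation*}

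The key step is to bound $e_{n-1}$ from above by a power of $\delta_n$. For this I use a quantitative form of Pontryagin sufficiency: applying It\^o's formula to $Y^n_s\cdot(X^n_s-X^{\alpha^*}_s)$ (recall $X^n=X^{\alpha^{n-1}}$ and $(Y^n,Z^n)$ is the corresponding adjoint) and invoking convexity of $g$ and of $\mathcal H$ in $(x,a)$ gives the gradient inequality
\begin{equation*}
e_{n-1}=J(x,\alpha^{n-1})-J(x,\alpha^*)\le\E\int_0^T D_a\mathcal H(s,X^n_s,Y^n_s,Z^n_s,\alpha^{n-1}_s)\cdot(\alpha^{n-1}_s-\alpha^*_s)\,ds.
\end{equation*}
By Cauchy--Schwarz this is at most $\|D_a\mathcal H(\cdot,\alpha^{n-1})\|_{L^2}\,\|\alpha^{n-1}-\alpha^*\|_{L^2}$, so it remains to control each factor.

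Assuming the iterates stay in a bounded set, so that $\|\alpha^{n-1}-\alpha^*\|_{L^2}$ is uniformly bounded, the decisive factor is $\|D_a\mathcal H(\cdot,\alpha^{n-1})\|_{L^2}$. The first-order condition for the augmented minimisation reads $D_a\mathcal H(\cdot,\alpha^n)=-\rho(D_ab_2)^\top(b_2(\alpha^n)-b_2(\alpha^{n-1}))-\rho(D_a\sigma_2)^\top(\sigma_2(\alpha^n)-\sigma_2(\alpha^{n-1}))$, which together with Lipschitz continuity of $D_a\mathcal H$ in $a$ gives $\|D_a\mathcal H(\cdot,\alpha^{n-1})\|_{L^2}\lesssim\|\alpha^n-\alpha^{n-1}\|_{L^2}$; in turn, uniform strong convexity of $\tilde{\mathcal H}(\cdot,\alpha^{n-1},\cdot)$ in $a$ (available for $\rho$ large from the quadratic penalty together with the convexity of $b_2,\sigma_2,f_2$) yields $\|\alpha^n-\alpha^{n-1}\|_{L^2}^2\lesssim\delta_n$. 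Combining, $e_{n-1}\le C\sqrt{\delta_n}$, i.e. $\delta_n\ge c\,e_{n-1}^2$. Substituting into the per-step bound gives the recursion $e_{n-1}-e_n\ge c\,e_{n-1}^2$ with $e_n\ge0$ nonincreasing, and the auxiliary lemma of Appendix~\ref{sec appendix} turns this into $e_n\le C/n$.

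I expect the main obstacle to be the estimate $\|\alpha^n-\alpha^{n-1}\|_{L^2}^2\lesssim\delta_n$ that controls the size of the control update by the realised decrease: it requires uniform strong convexity of the augmented Hamiltonian in the control and \emph{a priori} boundedness of the iterates, and this is precisely where the affine state dependence (making $D_x\mathcal H$ control-free and the penalty purely quadratic in $b_2,\sigma_2$) and the convexity of $b_2,\sigma_2,f_2$ must be used together. It is also the source of the slow rate: the optimality gap enters the decrease quadratically rather than linearly, so the recursion produces $O(1/n)$ rather than geometric convergence.
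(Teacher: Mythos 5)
Your outer skeleton matches the paper: the per-step bound $e_{n-1}-e_n\ge\delta_n\ge 0$ obtained from Lemma~\ref{lem estimate for difference of J} with $\rho\ge 2C$, an It\^o-plus-convexity argument relating the optimality gap to a Hamiltonian decrement, and Lemma~\ref{lem sequence b_k} to convert a quadratic recursion into $O(1/n)$. The gap is in the crucial middle link $e_{n-1}\le C\sqrt{\delta_n}$, which you derive by a chain that is not available under the corollary's hypotheses. You need (i) $A$ to be a convex open subset of a vector space with $\mathcal H$ differentiable in $a$, an interior first-order condition, and $D_a\mathcal H$ Lipschitz in $a$ --- but $A$ is only a separable metric space and no regularity in $a$ is assumed; (ii) uniform strong convexity of $\tilde{\mathcal H}(\cdot,\alpha^{n-1},\cdot)$ in $a$, which does \emph{not} follow from the penalty terms: they are quadratic in $b_2(a)-b_2(a')$ and $\sigma_2(a)-\sigma_2(a')$, not in $a-a'$, so whenever $b_2,\sigma_2$ are flat in some control direction the estimate $\|\alpha^n-\alpha^{n-1}\|_{L^2}^2\lesssim\delta_n$ fails; and (iii) a priori boundedness of $\|\alpha^{n-1}-\alpha^*\|_{L^2}$, which is asserted without justification. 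You correctly identified (ii) as the main obstacle; it cannot be repaired without adding nondegeneracy and smoothness assumptions in the control variable that the statement does not make.

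The paper's device, which your proposal is missing, avoids quantifying anything in the control variable: it is a localization in time (Lemma~\ref{lem delta H E tau h}). For each $h>0$ there is a $\tau$ such that the window $I_{\tau,h}=[\tau-h,\tau+h]\cap[0,T]$ carries at least an $h/T$ fraction of the total Hamiltonian decrement $\mu(\alpha^{n-1})$. One then splits the right-hand side of Lemma~\ref{lem estimate for difference of J} over $I_{\tau,h}$ and its complement, kills the contribution of the complement using the minimization property of $\alpha^n$ together with $\rho\ge 2C$, and obtains $J(x,\alpha^{n})-J(x,\alpha^{n-1})\le(1-2C/\rho)\,h\,\mu(\alpha^{n-1})/T$; optimizing over $h$ (take $h\propto -\mu(\alpha^{n-1})$) yields $e_{n-1}-e_n\ge c\,\mu(\alpha^{n-1})^2$. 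The companion inequality $e_{n-1}\le-\mu(\alpha^{n-1})$ comes from exactly your It\^o/convexity computation, except that one stops at $e_{n-1}\le\E\int_0^T[\mathcal H(s,X^n_s,Y^n_s,Z^n_s,\alpha^{n-1}_s)-\mathcal H(s,X^n_s,Y^n_s,Z^n_s,\alpha^*_s)]\,ds$, using only convexity of $f,g$ in $x$ and the affine structure, without linearizing in $a$. Combining gives $e_{n-1}-e_n\ge c\,e_{n-1}^2$ and Lemma~\ref{lem sequence b_k} finishes. In short: keep your first and last steps, but replace the strong-convexity chain by the time-localization argument.
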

The proof of Corollary~\ref{cor mmsa rate} will be given in Section~\ref{sec proofs}. Theorem~\ref{thr convergence of modified MSA} and Corollary~\ref{cor mmsa rate} are extensions of the result in~\cite{chernousko} to the stochastic case.

\section{Proofs}\label{sec proofs}
We start working towards the proof of Theorem~\ref{thr convergence of modified MSA}. Recall the adjoint equation for an admissible control $\alpha$:
\begin{equation}\label{eq BSDE}
dY_s^{\alpha}=-D_x\mathcal{H}(s,X_s^{\alpha},Y_s^{\alpha},Z_s^{\alpha},\alpha_s)\,ds+Z_s^{\alpha}\,dW_s,\,s\in[0,T],\,\,Y_T=D_xg(X_T^{\alpha})\,.
\end{equation}
{\color{red}From now on, we shall use Einstein notation, so that repeated indices in a single term imply summation over all the values of that index.}
{\color{red}
\begin{lemma}\label{lem Y Z bounded}
	Assume that there exists $K\ge 0$ 
	such that $\forall x\in\R^d,\forall a\in A,\forall t\in[0,T]$ we have
	\begin{equation*}
	|D_x b(t,x,a)|+|D_x\sigma(t,x,a)|\le K\,,
	\end{equation*}
	and
	\begin{equation*}
	|D_x g(x)|+|D_xf(t,x,a)|\le K\,.
	\end{equation*}
	Then $\|Y^{\alpha}\|_{\mathbb{H}^{\infty}}$  is bounded.
\end{lemma}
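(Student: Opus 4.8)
The plan is to exploit that, once the trajectory $X^\alpha$ and the control $\alpha$ are frozen, the adjoint equation~\eqref{eq BSDE} is a \emph{linear} (affine) BSDE in $(Y^\alpha,Z^\alpha)$ with bounded coefficients, and then to upgrade the usual square-integrability to an essential-sup bound by an \Ito/conditional-Gronwall argument. First I would record the affine structure of the driver: writing the Hamiltonian in Einstein notation, $[D_x\mathcal{H}]_l = D_{x_l}b^i\,y_i + D_{x_l}\sigma^{ij}\,z_{ij} + D_{x_l}f$, so that under the hypotheses $|D_x b|,|D_x\sigma|,|D_x f|\le K$ one has the pointwise bound
\[
|D_x\mathcal{H}(s,X^\alpha_s,Y^\alpha_s,Z^\alpha_s,\alpha_s)|\le K|Y^\alpha_s| + K|Z^\alpha_s| + K\,.
\]
In particular the driver is globally Lipschitz in $(y,z)$ with a bounded free term, and together with $|Y^\alpha_T| = |D_xg(X^\alpha_T)|\le K$ the classical theory of Lipschitz BSDEs gives a unique solution with $\E[\sup_{s\in[0,T]}|Y^\alpha_s|^2] + \E\int_0^T|Z^\alpha_s|^2\,ds<\infty$; this a priori integrability is all I will need to start the iteration below.

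Next I would apply \Ito's formula to $|Y^\alpha_s|^2$ on $[t,T]$, take $\E_t$, and note that the stochastic integral is a genuine martingale thanks to the $\mathbb{S}^2\times\mathbb{H}^2$ bounds, giving
\[
|Y^\alpha_t|^2 = \E_t|Y^\alpha_T|^2 + \E_t\int_t^T\big(2\,Y^\alpha_s\cdot D_x\mathcal{H}(s,X^\alpha_s,Y^\alpha_s,Z^\alpha_s,\alpha_s) - |Z^\alpha_s|^2\big)\,ds\,.
\]
Using the driver bound together with Young's inequality $2K|Y^\alpha_s||Z^\alpha_s|\le |Z^\alpha_s|^2 + K^2|Y^\alpha_s|^2$, the $|Z^\alpha_s|^2$ term cancels, so that for constants $a,c$ depending only on $K$ and $T$ one obtains
\[
|Y^\alpha_t|^2 \le a + c\,\E_t\int_t^T|Y^\alpha_s|^2\,ds \qquad\text{for a.e. }(t,\omega)\,.
\]
Finally I would close the estimate by a backward stochastic Gronwall inequality: setting $\Phi_s:=|Y^\alpha_s|^2$, the bound $\Phi_t\le a + c\,\E_t\int_t^T\Phi_s\,ds$ is iterated via the tower property, the remainder after $n$ steps being dominated by $\frac{c^n(T-t)^{n-1}}{(n-1)!}\,\E_t\int_t^T\Phi_u\,du\to 0$ because $\E\int_0^T\Phi_u\,du<\infty$. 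This yields $\Phi_t\le a\,e^{c(T-t)}$, hence $\|Y^\alpha\|_{\mathbb{H}^\infty}\le (a\,e^{cT})^{1/2}$, with a bound depending only on $K$ and $T$ and therefore uniform in $\alpha$.

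The main obstacle worth flagging is that the equation is genuinely vector-valued and the $Z$-term couples all components through the matrix coefficients $D_{x_l}\sigma^{ij}$, so the usual Girsanov representation for scalar linear BSDEs does not apply directly; the \Ito-plus-conditional-Gronwall route above deliberately sidesteps this, since it only uses the scalar estimate $|D_x\mathcal{H}|\le K(|Y^\alpha|+|Z^\alpha|+1)$ and the full cancellation of $|Z^\alpha|^2$. The only delicate points are then the justification of the martingale property needed to take $\E_t$ and the convergence of the Gronwall iteration, both of which rest on the a priori $\mathbb{S}^2\times\mathbb{H}^2$ estimate for Lipschitz BSDEs with bounded data.
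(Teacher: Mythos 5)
Your argument is correct, but it proceeds by a genuinely different route from the paper. The paper also observes that, with $X^\alpha$ and $\alpha$ frozen, \eqref{eq BSDE} is a linear BSDE, but it then invokes the explicit representation formula for multidimensional linear BSDEs from Harter--Richou: $Y_t^{\alpha}=\E_t[S_t^{-1}S_T D_xg(X_T^{\alpha})+\int_t^T S_t^{-1}S_sD_xf(s,X_s^{\alpha},\alpha_s)\,ds]$, where $S$ solves a matrix-valued linear SDE driven by $D_xb$ and $D_x\sigma$, and concludes via their Corollary~3.7 that $\|Y^{\alpha}\|_{\mathbb{H}^{\infty}}\le C\|D_xg(X_T^{\alpha})\|_{L^{\infty}}+CT\|D_xf\|_{\mathbb{H}^{\infty}}$. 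This is precisely the systems analogue of the Girsanov/adjoint-process representation that you flag as delicate in the vector-valued case; the paper outsources that difficulty to the cited result. Your It\^o-plus-conditional-Gronwall argument avoids the representation entirely: the exact cancellation $2K|Y||Z|\le |Z|^2+K^2|Y|^2$ against the quadratic variation term is the key step, and the rest only uses the standard $\mathbb{S}^2\times\mathbb{H}^2$ theory for Lipschitz BSDEs to justify the martingale property and the convergence of the iteration. What you gain is a self-contained, elementary proof with a fully explicit constant, $\|Y^\alpha\|_{\mathbb{H}^\infty}^2\le (K^2+T)e^{(2K+2K^2)T}$ (up to dimension-dependent constants from the choice of norms), manifestly uniform in $\alpha$, which is exactly the uniformity used later in the proof of Lemma~\ref{lem estimate for difference of J}; what the paper's route buys is brevity and, via the same machinery, access to estimates on $Z^\alpha$ should they be needed, at the cost of verifying the integrability hypotheses on $S$ and $S^{-1}$ required by the cited results. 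Both are valid; yours is arguably more transparent for this particular statement.
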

\begin{proof}
From the definition of the Hamiltonian~\eqref{eq: hamiltonian} we have
\begin{equation*}
\begin{split}
&D_{x_i}\mathcal{H}(s,X_s^{\alpha},Y_s^{\alpha},Z_s^{\alpha},\alpha_s)=D_{x_i}b^j(s,X_s^{\alpha},\alpha_s) (Y_s^{\alpha})^j+D_{x_i}\sigma^{jp}(s,X_s^{\alpha},\alpha_s) (Z_s^{\alpha})^{jp}\\
&\qquad\qquad\qquad\qquad\qquad\qquad\qquad+D_{x_i}f(s,X_s^{\alpha},\alpha_s)\,,\,\,\forall s\in[0,T]\,,\,\,i=1,2,\dots,d\,.
\end{split}
\end{equation*}
Hence, one can observe that~\eqref{eq BSDE} is a linear BSDE. Therefore, from~\cite[Proposition 3.2]{richou} we can write the formula for the solution of~\eqref{eq BSDE}:
\begin{equation*}
	Y_t^{\alpha}=\E_t\left[S_t^{-1}S_T D_xg(X_T^{\alpha})+\int_t^T S_t^{-1}S_sD_xf(s,X_s^{\alpha},\alpha_s)\,ds\right]\,,
\end{equation*}
where the process $S$ is the unique strong solution of
\begin{equation*}
	dS_t^{ij}=S^{il}_t D_{x_l}b^j(t,X_t^{\alpha},\alpha_t)\,dt+S^{il}_t D_{x_l}\sigma^{jp}(t,X_t^{\alpha},\alpha_t)\,dW_t^p\,,\,i,j=1,2,\dots,d,\,S_0=I_d\,,
\end{equation*}
and $S^{-1}$ is the inverse process of $S$. Thus, due to~\cite[Corollary 3.7]{richou} and assumptions of lemma we have the following bound:
\begin{equation*}
	\|Y^{\alpha}\|_{\mathbb{H}^{\infty}}\le C\|D_xg(X_T^{\alpha})\|_{L^{\infty}}+CT\|D_xf(\cdot,X_{\cdot}^{\alpha},\alpha_{\cdot})\|_{\mathbb{H}^{\infty}}\,.
\end{equation*}
Hence, due to assumptions of lemma we conclude that $\|Y^{\alpha}\|_{\mathbb{H}^{\infty}}$ is bounded.
\end{proof}
}

\begin{proof}[Proof of Lemma~\ref{lem estimate for difference of J}]
	Let $\varphi$ and $\theta$ be some generic admissible controls. We will write $(X^{\varphi}_s)_{s\in[0,T]}$ for the solution of~\eqref{sde} controlled by $\varphi$ and $(X^{\theta}_s)_{s\in[0,T]}$ for the solution of~\eqref{sde} controlled by $\theta$. We denote solutions of corresponding adjoint equations by $(Y^{\varphi}_s,\,Z^{\varphi}_s)_{s\in[0,T]}$ and  $(Y^{\theta}_s,\,Z^{\theta}_s)_{s\in[0,T]}$. Due to Taylor's theorem, we note that for some $R^1(\omega)\in[0,1]$, we have $\forall\omega\in\Omega$ that
	{\color{red}
	\begin{equation*}
	\begin{split}
	g(X^\varphi_T)&-g(X^\theta_T)=  (D_xg(X^\theta_T))^\top(X^\varphi_T-X^\theta_T)\\
	&\qquad+\frac{1}{2}(X^\varphi_T-X^\theta_T)^\top D^2_{x}g(X^\theta_T+R^1(X^\varphi_T-X^\theta_T))(X^\varphi_T-X^\theta_T)\\
	&\le (D_xg(X^\theta_T))^\top(X^\varphi_T-X^\theta_T)\\
	&\qquad+\frac{1}{2}(X^\varphi_T-X^\theta_T)^\top \left|D^2_{x}g(X^\theta_T+R^1(X^\varphi_T-X^\theta_T))\right|(X^\varphi_T-X^\theta_T)\\
	&\le (D_xg(X^\theta_T))^\top(X^\varphi_T-X^\theta_T)+\frac{K}{2}\left|X^\varphi_T-X^\theta_T\right|^2\,.
	\end{split}
	\end{equation*}
	The last inequality holds due to Assumption~\ref{ass second order derivative of Hamiltonian bounded}.
	} 
	Recall that $Y^\theta_T=D_xg(X^\theta_T)$. 
	Hence, using It\^o's product rule, we get 
	{\color{red}
	\begin{equation*}
	\begin{split}
	\E&[g(X^\varphi_T)-g(X^\theta_T)] \le \E \left[(Y^\theta_T)^\top(X^\varphi_T-X^\theta_T)+\frac{K}{2}\left|X^\varphi_T-X^\theta_T\right|^2\right]\\ 
	&\le\E\int_0^T (X^\varphi_s-X^\theta_s)^\top\,dY^\theta_s+\E\int_0^T (Y^\theta_s)^\top[dX^\varphi_s-dX^\theta_s]\\
	&\qquad+\E\int_0^T\text{tr}[(\sigma(s,X^\varphi_s,\varphi_s)-\sigma(s,X^\theta_s,\theta_s))^\top Z^\theta_s]\,ds+\frac{K}{2}\E\left[\left|X^\varphi_T-X^\theta_T\right|^2\right]\,.\\
	\end{split}
	\end{equation*}
	}
	From this, the forward SDE~\eqref{sde} and the adjoint equation~\eqref{eq: adjoint equation} we thus get
	\begin{equation}\label{eq diff of g}
	\begin{split}
	& \E[g(X^\varphi_T)-g(X^\theta_T)] \\
	&\le-\E\int_0^T(X^\varphi_s-X^\theta_s)^\top D_x\mathcal{H}(s,X^\theta_s,Y^\theta_s,Z^\theta_s,\theta_s)\,ds\\
	&\qquad+\E\int_0^T(Y^\theta_s)^\top[b(s,X^\varphi_s,\varphi_s)-b(s,X^\theta_s,\theta_s)]\,ds\\
	&\qquad+\E\int_0^T\text{tr}[(\sigma(s,X^\varphi_s,\varphi_s)-\sigma(s,X^\theta_s,\theta_s))^\top Z^\theta_s]\,ds+\frac{K}{2}\E\left[\left|X^\varphi_T-X^\theta_T\right|^2\right]\,.
	\end{split}
	\end{equation}
	On the other hand, by definition of the Hamiltonian we have
	\begin{equation}\label{eq diff of f}
	\begin{split}
	\E&\int_0^T[f(s,X^\varphi_s,\varphi_s)-f(s,X^\theta_s,\theta_s)]\,ds\\
	&=\E\int_0^T[\mathcal{H}(s,X^\varphi_s,Y^\theta_s,Z^\theta_s,\varphi_s)-\mathcal{H}(s,X^\theta_s,Y^\theta_s,Z^\theta_s,\theta_s)]\,ds\\
	&\qquad-\E\int_0^T(Y^\theta_s)^\top[b(s,X^\varphi_s,\varphi_s)-b(s,X^\theta_s,\theta_s)]\,ds\\
	&\qquad-\E\int_0^T\text{tr}[(\sigma(s,X^\varphi_s,\varphi_s)-\sigma(s,X^\theta_s,\theta_s))^\top Z^\theta_s]\,ds\,.
	\end{split}
	\end{equation}
	Summing up~\eqref{eq diff of g} and~\eqref{eq diff of f} we get
	\begin{equation}\label{eq diff of J}
	\begin{split}
	& J(x,\varphi)-J(x,\theta)\\
	& =\E[g(X^\varphi_T)-g(X^\theta_T)]+\E\int_0^T[f(s,X^\varphi_s,\varphi_s)-f(s,X^\theta_s,\theta_s)]\,ds\\
	&\le \E\int_0^T[\mathcal{H}(s,X^\varphi_s,Y^\theta_s,Z^\theta_s,\varphi_s)-\mathcal{H}(s,X^\theta_s,Y^\theta_s,Z^\theta_s,\theta_s)\\
	&\qquad-(X^\varphi_s-X^\theta_s)^\top D_x\mathcal{H}(s,X^\theta_s,Y^\theta_s,Z^\theta_s,\theta_s)]\,ds+\frac{K}{2}\E\left[\left|X^\varphi_T-X^\theta_T\right|^2 \right]\,.
	\end{split}
	\end{equation}
	Due to Taylor's theorem, there exists   $(R^2_s(\omega))_{s\in[0,T]}\in[0,1]$ such that $\forall\omega\in\Omega$ we have
	{\color{red}
	\begin{equation}\label{eq diff of Hamiltonian}
	\begin{split}
	&\mathcal{H}(s,X^\varphi_s,Y^\theta_s,Z^\theta_s,\varphi_s)- \mathcal{H}(s,X^\theta_s,Y^\theta_s,Z^\theta_s,\theta_s)\\
	&=\mathcal{H}(s,X^\theta_s,Y^\theta_s,Z^\theta_s,\varphi_s)-\mathcal{H}(s,X^\theta_s,Y^\theta_s,Z^\theta_s,\theta_s)\\
	&+(X^\varphi_s-X^\theta_s)^\top D_x\mathcal{H}(s,X^\theta_s,Y^\theta_s,Z^\theta_s,\varphi_s)\\
	&+\frac{1}{2}(X^\varphi_s-X^\theta_s)^\top D_{x}^2\mathcal{H}(s,X^\theta_s+R^2_s(X^\varphi_s-X^\theta_s),Y^\theta_s,Z^\theta_s,\varphi_s)(X^\varphi_s-X^\theta_s)\,.
	\end{split}
	\end{equation}
	Since $D_x^2\sigma(s,X^\theta_s+R^2_s(X^\varphi_s-X^\theta_s),\varphi_s)=0$ by Assumption~\ref{assumption controlled SDE for modified MSA}, we have that
	\begin{equation*}
		\begin{split}
		&\left|D_{x_ix_j}^2\mathcal{H}(s,X^\theta_s+R^2_s(X^\varphi_s-X^\theta_s),Y^\theta_s,Z^\theta_s,\varphi_s)\right|\\
		&\qquad=\left|D_{x_i x_j}^2 b^l(s,X^\theta_s+R^2_s(X^\varphi_s-X^\theta_s),\varphi_s)(Y^\theta_s)^l\right.\\
		&\qquad\qquad\left.+D_{x_i x_j}^2 f(s,X^\theta_s+R^2_s(X^\varphi_s-X^\theta_s),\varphi_s)\right|\,,\,i,j=1,2,\dots,d\,.
		\end{split}
	\end{equation*}
	From Lemma~\ref{lem Y Z bounded} we know that $|Y^\theta_s|$ is bounded a.s. for all $s\in[0,T]$.
	}
	 Hence by Assumption~\ref{assumption controlled SDE for modified MSA} and~\ref{ass second order derivative of Hamiltonian bounded} we have
	\begin{equation}\label{eq D_x^2 H is bounded}
		|D_{x}^2\mathcal{H}(s,X^\theta_s+R^2_s(X^\varphi_s-X^\theta_s),Y^\theta_s,Z^\theta_s,\varphi_s)|<\infty\,.
	\end{equation}
	Therefore, after substituting~\eqref{eq diff of Hamiltonian} into~\eqref{eq diff of J}, and by~\ref{eq D_x^2 H is bounded} we get
	\begin{equation*}
	\begin{split}
	J(x,&\varphi)-J(x,\theta)= \E\left[\int_0^T[\mathcal{H}(s,X^\theta_s,Y^\theta_s,Z^\theta_s,\varphi_s)-\mathcal{H}(s,X^\theta_s,Y^\theta_s,Z^\theta_s,\theta_s)\right.\\
	&+(X^\varphi_s-X^\theta_s)^\top(D_x\mathcal{H}(s,X^\theta_s,Y^\theta_s,Z^\theta_s,\varphi_s)-D_x\mathcal{H}(s,X^\theta_s,Y^\theta_s,Z^\theta_s,\theta_s))\\
	&\left.+\frac{K}{2}\left|X^\varphi_s-X^\theta_s\right|^2\,ds\right]+\frac{K}{2}\E\left[\left|X^\varphi_T-X^\theta_T\right|^2\right]\,.
	\end{split}
	\end{equation*}
	Let us now get a standard SDE estimate for the difference of $X^{\varphi}$ and $X^{\theta}$. From $(a+b)^2\le 2a^2+2b^2$, from taking the expectation, from H\"older's inequality, from Assumption~\ref{assumption controlled SDE for modified MSA}, from the Burkholder-Davis-Gundy inequality and from Gronwall's inequality we obtain
	\begin{equation}\label{eq standatd sde estimate}
	\begin{split}
	\E\sup_{0\le t\le T}&|X^\varphi_t-X^\theta_t|^2\le C\E\int_0^T|b(s,X^\theta_s,\varphi_s)-b(s,X^\theta_s,\theta_s)|^2\,ds\\
	&+C\E\int_0^T|\sigma(s,X^\theta_s,\varphi_s)-\sigma(s,X^\theta_s,\theta_s)|^2\,ds\,.
	\end{split}
	\end{equation}
	Young's inequality allows us to get the estimate
	\begin{equation*}
	\begin{split}
	J&(x,\varphi)-J(x,\theta)\\
	&\le\E\int_0^T[\mathcal{H}(s,X^\theta_s,Y^\theta_s,Z^\theta_s,\varphi_s)-\mathcal{H}(s,X^\theta_s,Y^\theta_s,Z^\theta_s,\theta_s)]\,ds+\frac{1}{2}\E\int_0^T|X^\varphi_s-X^\theta_s|^2\,ds\\
	&\quad\qquad+\frac{1}{2}\E\left[\int_0^T|D_x\mathcal{H}(s,X^\theta_s,Y^\theta_s,Z^\theta_s,\varphi_s)-D_x\mathcal{H}(s,X^\theta_s,Y^\theta_s,Z^\theta_s,\theta_s)|^2\right.\\
	&\quad\qquad
	\qquad\left.+\frac{K}{2}\left|X^\varphi_s-X^\theta_s\right|^2\,ds\right]+\frac{K}{2}\E\left[\left|X^\varphi_T-X^\theta_T\right|^2\right]\,.
	\end{split}
	\end{equation*}
	Hence, from~\eqref{eq standatd sde estimate} we have that
	\begin{equation*}
	\begin{split}
	J&(x,\varphi)-J(x,\theta) \le\E\int_0^T[\mathcal{H}(s,X^\theta_s,Y^\theta_s,Z^\theta_s,\varphi_s)-\mathcal{H}(s,X^\theta_s,Y^\theta_s,Z^\theta_s,\theta_s)]\,ds\\
	&\quad+C\E\int_0^T|b(s,X^\theta_s,\varphi_s)-b(s,X^\theta_s,\theta_s)|^2\,ds\\
	&\quad+C\E\int_0^T|\sigma(s,X^\theta_s,\varphi_s)-\sigma(s,X^\theta_s,\theta_s)|^2\,ds\\
	&\quad+C\E\int_0^T|D_x\mathcal{H}(s,X^\theta_s,Y^\theta_s,Z^\theta_s,\varphi_s)-D_x\mathcal{H}(s,X^\theta_s,Y^\theta_s,Z^\theta_s,\theta_s)|^2\,ds\,,
	\end{split}
	\end{equation*}
	for some constant $C>0$, which depends on $K,T$, and $d$.
\end{proof}

\begin{proof}[Proof of Theorem~\ref{thr extended pop}]
	Since $\alpha^*$ is the {\color{red}(locally)} optimal control for the problem~\eqref{problem}, the Pontryagin's optimality principle holds, see e.g.~\cite{pham book}. Hence for any $a\in A$ we have
	\begin{equation}\label{eq Pontryagin principle}
	\mathcal{H}(s,X_s^{\alpha^*},Y_s^{\alpha^*},Z_s^{\alpha^*},\alpha_s^*)\le \mathcal{H}(s,X_s^{\alpha^*},Y_s^{\alpha^*},Z_s^{\alpha^*},a)\,,\,\,\,\forall s\in[0,T]\,.
	\end{equation}
	By definition of the augmented Hamiltonian~\eqref{eq: augmented hamiltonian} for all $s\in[0,T]$ we have
	\begin{equation}\label{eq recall augmented hamiltonian}
	\begin{split}
	&\tilde{\mathcal{H}}(s,X_s^{\alpha^*},Y_s^{\alpha^*},Z_s^{\alpha^*},\alpha_s^*,a)=\mathcal{H}(s,X_s^{\alpha^*},Y_s^{\alpha^*},Z_s^{\alpha^*},a)\\
	&\qquad+\frac{1}{2}\rho|b(s,X_s^{\alpha^*},a)-b(s,X_s^{\alpha^*},\alpha_s^*)|^2+\frac{1}{2}\rho|\sigma(s,X_s^{\alpha^*},a)-\sigma(s,X_s^{\alpha^*},\alpha_s^*)|^2\\
	&\qquad+\frac{1}{2}\rho|D_x\mathcal{H}(s,X_s^{\alpha^*},Y_s^{\alpha^*},Z_s^{\alpha^*},a)-D_x\mathcal{H}(s,X_s^{\alpha^*},Y_s^{\alpha^*},Z_s^{\alpha^*},\alpha_s^*)|^2\,.
	\end{split}
	\end{equation}
	Therefore, due to~\eqref{eq Pontryagin principle} and~\eqref{eq recall augmented hamiltonian} we have
	\begin{equation*}
	\begin{split}
	\tilde{\mathcal{H}}&(s,X_s^{\alpha^*},Y_s^{\alpha^*},Z_s^{\alpha^*},\alpha_s^*,\alpha_s^*)=\mathcal{H}(s,X_s^{\alpha^*},Y_s^{\alpha^*},Z_s^{\alpha^*},\alpha_s^*)\\
	&\le \mathcal{H}(s,X_s^{\alpha^*},Y_s^{\alpha^*},Z_s^{\alpha^*},a)+\frac{1}{2}\rho|b(s,X_s^{\alpha^*},a)-b(s,X_s^{\alpha^*},\alpha_s^*)|^2\\
	&\qquad+\frac{1}{2}\rho|\sigma(s,X_s^{\alpha^*},a)-\sigma(s,X_s^{\alpha^*},\alpha_s^*)|^2\\
	&\qquad+\frac{1}{2}\rho|D_x\mathcal{H}(s,X_s^{\alpha^*},Y_s^{\alpha^*},Z_s^{\alpha^*},a)-D_x\mathcal{H}(s,X_s^{\alpha^*},Y_s^{\alpha^*},Z_s^{\alpha^*},\alpha_s^*)|^2\\
	&=\tilde{\mathcal{H}}(s,X_s^{\alpha^*},Y_s^{\alpha^*},Z_s^{\alpha^*},\alpha_s^*,a)\,.
	\end{split}
	\end{equation*}
	This concludes the proof.
\end{proof}

\begin{proof}[Proof of Theorem~\ref{thr convergence of modified MSA}]
	Let us apply Lemma~\ref{lem estimate for difference of J} for $\varphi=\alpha^{n}$ and $\theta=\alpha^{n-1}$. Hence, for some $C>0$ we have
	\begin{equation}\label{eq difference of J in the proof}
	\begin{split}
	J&(x,\alpha^{n})-J(x,\alpha^{n-1})\\
	&\le\E\int_0^T[\mathcal{H}(s,X^{n}_s,Y^{n}_s,Z^{n}_s,\alpha^{n}_s)-\mathcal{H}(s,X^{n}_s,Y^{n}_s,Z^{n}_s,\alpha^{n-1}_s)]\,ds\\
	&\quad+C\E\int_0^T|b(s,X^{n}_s,\alpha^{n}_s)-b(s,X^{n}_s,\alpha^{n-1}_s)|^2\,ds\\
	&\quad+C\E\int_0^T|\sigma(s,X^{n}_s,\alpha^{n}_s)-\sigma(s,X^{n}_s,\alpha^{n-1}_s)|^2\,ds\\
	&\quad+C\E\int_0^T\left|D_x\mathcal{H}(s,X^{n}_s,Y^{n}_s,Z^{n}_s,\alpha^{n}_s)-D_x\mathcal{H}(s,X^{n}_s,Y^{n}_s,Z^{n}_s,\alpha^{n-1}_s)\right|^2\,ds\,.
	\end{split}
	\end{equation}
	Let
	\begin{equation*}
	\mu(\alpha^{n-1})=\E\int_0^T[\mathcal{H}(s,X^{n}_s,Y^{n}_s,Z^{n}_s,\alpha^{n}_s)-\mathcal{H}(s,X^{n}_s,Y^{n}_s,Z^{n}_s,\alpha^{n-1}_s)]\,ds\,.
	\end{equation*}
	Due to the definition of $\alpha^n$~\eqref{eq def anplus1 MMSA} and~\eqref{eq extended pontryagin} we have for all $s\in[0,T]$
	\begin{equation*}
	\begin{split}
	\mathcal{H}&(s,X_s^{n},Y_s^{n},Z_s^{n},\alpha^{n}_s)+\frac{1}{2}\rho|b(s,X^{n}_s,\alpha^{n}_s)-b(s,X^{n}_s,\alpha^{n-1}_s)|^2\\
	&\qquad+\frac{1}{2}\rho|\sigma(s,X^{n}_s,\alpha^{n}_s)-\sigma(s,X^{n}_s,\alpha^{n-1}_s)|^2\\
	&\qquad+\frac{1}{2}\rho|D_x\mathcal{H}(s,X^{n}_s,Y^{n}_s,Z^{n}_s,\alpha^{n}_s)-D_x\mathcal{H}(s,X^{n}_s,Y^{n}_s,Z^{n}_s,\alpha^{n-1}_s)|^2\\
	&\le \mathcal{H}(s,X_s^{n},Y_s^{n},Z_s^{n},\alpha^{n-1}_s)\,.
	\end{split}
	\end{equation*}
	Therefore, we can observe that $\mu(\alpha^{n-1})\le0$.
	Hence we can rewrite the inequality~\eqref{eq difference of J in the proof} as
	\begin{equation}\label{eq J(alpha n)-J(alpha n-1)}
	\begin{split}
	J(x,\alpha^{n})&-J(x,\alpha^{n-1})\le \mu(\alpha^{n-1})-\frac{2C}{\rho}\mu(\alpha^{n-1})=D\mu(\alpha^{n-1})\,,
	\end{split}
	\end{equation}
	where $D:=1-\frac{2C}{\rho}$. 
	By choosing $\rho>2C$ we have that $D>0$.  
	Notice that for any integer $M>1$ we have 
	\begin{equation*}
	\begin{split}
	\sum_{n=1}^M&(-\mu(\alpha^{n-1}))\le D^{-1}\sum_{n=1}^M(J(x,\alpha^{n-1})-J(x,\alpha^{n}))\\
	&\,=D^{-1}(J(x,\alpha^0)-J(x,\alpha^{M}))\le D^{-1}(J(x,\alpha^{0})-\inf_{\alpha\in\mathcal{A}}J(x,\alpha))<\infty.
	\end{split}
	\end{equation*}
	Since $(-\mu(\alpha^{n-1}))\ge 0$ and $\sum_{n=1}^\infty(-\mu(\alpha^{n-1}))<+\infty$ we have that $\mu(\alpha^{n-1})\rightarrow 0$ as $n\rightarrow 0$. This concludes the proof.
\end{proof}
We need to introduce new notation, which will be used in the proof of Corollary~\ref{cor mmsa rate}. {\color{red}Denote the set
\begin{equation}\label{eq def of I}
	I_{\tau, h}:=[\tau-h,\tau+h]\cap[0,T]\,\,,\,\tau\in[0,T],\,h\in[0,+\infty)\,.
\end{equation} 
}
Let us define for all $s\in[0,T]$
\begin{equation*} 
\Delta_{\alpha^{n-1}} \mathcal{H}(s):=\mathcal{H}(s,X^{n}_s,Y^{n}_s,Z^{n}_s,\alpha^{n}_s)-\mathcal{H}(s,X^{n}_s,Y^{n}_s,Z^{n}_s,\alpha^{n-1}_s)\,,
\end{equation*} 
and
\begin{equation*}
\mu(\alpha^{n-1}):=\E\int_{0}^T\Delta_{\alpha^{n-1}} \mathcal{H}(s)\,ds\,.
\end{equation*}
By definition of $\alpha^n$ notice that $\Delta_{\alpha^{n-1}} \mathcal{H}(t)\le0$ for all $t\in[0,T]$. Let us show an auxiliary lemma.
\begin{lemma}\label{lem delta H E tau h}
	For any $h>0$ there exists $\tau$, which depends on $h$ and $\alpha^{n-1}$, such that
	\begin{equation*}
	\E\int_{I_{\tau,h}}\Delta_{\alpha^{n-1}} \mathcal{H}(t)\,dt\le \frac{h\mu(\alpha^{n-1})}{T}\,.
	\end{equation*}
\end{lemma}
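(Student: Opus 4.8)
The plan is to reduce the probabilistic statement to a purely deterministic averaging argument on $[0,T]$, and then to extract the good centre $\tau$ by a mean-value (pigeonhole) principle. First I would set
\[
F(t):=\E\left[\Delta_{\alpha^{n-1}} \mathcal{H}(t)\right]\,,\qquad t\in[0,T]\,.
\]
Because $\Delta_{\alpha^{n-1}} \mathcal{H}(t)\le 0$ for all $t$ (by the definition of $\alpha^n$ via~\eqref{eq def anplus1 MMSA} and the extended Pontryagin principle), we have $F(t)\le 0$ for every $t$. Applying Tonelli's theorem to the nonnegative quantity $-\Delta_{\alpha^{n-1}} \mathcal{H}$ lets me interchange expectation and time integral, so that $\E\int_{I_{\tau,h}}\Delta_{\alpha^{n-1}} \mathcal{H}(t)\,dt=\int_{I_{\tau,h}}F(t)\,dt$ and $\int_0^T F(t)\,dt=\mu(\alpha^{n-1})$.

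The heart of the argument is to average the window integral over its centre. Define $G(\tau):=\int_{I_{\tau,h}}F(t)\,dt$ for $\tau\in[0,T]$. By Fubini,
\[
\frac1T\int_0^T G(\tau)\,d\tau=\frac1T\int_0^T F(t)\,L(t)\,dt\,,\qquad L(t):=\bigl|\{\tau\in[0,T]:|t-\tau|\le h\}\bigr|\,,
\]
where $L(t)=\min(t+h,T)-\max(t-h,0)$ is the measure of admissible centres whose window contains $t$. A direct check shows $h\le L(t)\le 2h$ on $[0,T]$ whenever $h\le T$. Since $F(t)\le 0$ and $L(t)\ge h$, one has $F(t)\,L(t)\le h\,F(t)$ pointwise, and therefore the average of $G$ is bounded above by $\tfrac{h}{T}\int_0^T F(t)\,dt=\tfrac{h}{T}\mu(\alpha^{n-1})$.

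Finally, $G$ is continuous on the compact interval $[0,T]$ (it is the integral of the fixed $L^1$ function $F$ over a continuously varying window), so it attains its minimum at some $\tau=\tau(h,\alpha^{n-1})$; this minimum is at most the average computed above, giving $G(\tau)=\E\int_{I_{\tau,h}}\Delta_{\alpha^{n-1}} \mathcal{H}(t)\,dt\le \tfrac{h}{T}\mu(\alpha^{n-1})$, which is exactly the claim. (If one prefers to avoid continuity, the average bound already forces $G(\tau)\le \tfrac{h}{T}\mu(\alpha^{n-1})$ on a set of positive measure.)

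I expect the only genuine subtlety to be the length bound $L(t)\ge h$, i.e. the fact that every admissible centre sees a window of Lebesgue mass at least $h$ inside $[0,T]$; this is where the hypothesis $h\le T$ enters. For $h>T$ the window $I_{\tau,h}$ equals all of $[0,T]$ for every $\tau$, so $G\equiv\mu(\alpha^{n-1})$ and the stated inequality can fail when $\mu(\alpha^{n-1})<0$. Hence $h\le T$ is the relevant (and, for the subsequent rate estimate, the only needed) range, and this should be flagged. Everything else—the Tonelli interchange, the nonpositivity of $F$, and the passage from the average to an individual good $\tau$—is routine.
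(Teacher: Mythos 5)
Your argument is correct on the range $0<h\le T$, and it reaches the conclusion by a genuinely different, direct route where the paper argues by contradiction. The paper covers $[0,T]$ by the finitely many overlapping windows $I_{ih^*,h^*}$, $i=0,\dots,\lfloor T/h^*\rfloor$, uses $\Delta_{\alpha^{n-1}}\mathcal H\le 0$ to bound the sum of the window integrals above by $\mu(\alpha^{n-1})$, and then sums the assumed lower bound $h^*\mu(\alpha^{n-1})/T$ over the windows. You instead average the window integral $G(\tau)=\int_{I_{\tau,h}}F(t)\,dt$, with $F(t)=\E[\Delta_{\alpha^{n-1}}\mathcal H(t)]$, over all centres and extract a good $\tau$ from the mean value:
\begin{equation*}
\frac1T\int_0^T G(\tau)\,d\tau=\frac1T\int_0^T F(t)\,L(t)\,dt\le\frac hT\int_0^T F(t)\,dt=\frac hT\,\mu(\alpha^{n-1})\,,
\end{equation*}
using only $F\le0$ and $L\ge h$ (and the integrability that makes $\mu(\alpha^{n-1})$ finite, which is implicit throughout). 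This buys two things. First, each hypothesis enters at exactly one visible point; by contrast, the paper's overlapping cover overcounts the windows, and since the integrand is nonpositive overcounting \emph{weakens} the bound, so its concluding chain of strict inequalities involving the factor $h^*N(h^*)/T$ requires more care than the write-up gives it when $\mu(\alpha^{n-1})<0$ --- your version avoids this bookkeeping entirely. Second, your analysis of $L$ exposes the one real hypothesis, $h\le T$: for $h>T$ every window equals $[0,T]$, so $G\equiv\mu(\alpha^{n-1})>h\mu(\alpha^{n-1})/T$ whenever $\mu(\alpha^{n-1})<0$, and the lemma as stated fails; the same defect is silently present in the paper's proof (there $N(h^*)=0$ and the contradiction evaporates). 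You are right to flag this: the restriction $h\le T$ should be added to the statement, and it is harmless for the application in Corollary~\ref{cor mmsa rate}, where $h$ is proportional to $-\mu(\alpha^{n-1})$, which tends to zero.
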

\begin{proof}
	We will prove by contradiction. Assume that there exists $h^*>0$ such that $\forall\tau\in[0,T]$ we have
	\begin{equation}\label{eq assumption for contradiction}
	\E\int_{I_{\tau, h^*}}\Delta_{\alpha^{n-1}} \mathcal{H}(t)\,dt>\frac{h^*\mu(\alpha^{n-1})}{T}\,.
	\end{equation}
	Denote $\tau_i=ih^*$, $i=0,1,\dots,N(h^*)$, where $N(h^*)=[T/h^*]$ - integer part.
	{\color{red}Since $\Delta_{\alpha^{n-1}} \mathcal{H}(t)\le 0$ for all $t\in[0,T]$ by definition of $\alpha^n$ and $\cup_{i=0}^{N(h^*)}I_{\tau_i, h^*}$ is a superset of $[0,T]$ we have 
	\begin{equation}
	\begin{split}
	\mu(\alpha^{n-1})=\E\int_{0}^T\Delta_{\alpha^{n-1}} \mathcal{H}(t)\,dt\ge \sum_{i=0}^{N(h^*)}\E\int_{I_{\tau_i, h^*}}\Delta_{\alpha^{n-1}} \mathcal{H}(t)\,dt\,.
	\end{split}
	\end{equation}
Hence, by~\eqref{eq assumption for contradiction} we get
	\begin{equation*}
	\mu(\alpha^{n-1})>\frac{h^* N(h^*)}{T}\E\int_0^T\Delta_{\alpha^{n-1}} \mathcal{H}(t)\,dt>\mu(\alpha^{n-1})\,.
	\end{equation*}
	}
	Hence we get the contradiction.
\end{proof}
Now we are ready to prove Corollary~\ref{cor mmsa rate}.
\begin{proof}[Proof of Corollary~\ref{cor mmsa rate}]
First, observe that
\begin{equation*}
\begin{split}
&b(s,X^n_s,\alpha^{n}_s)-b(s,X^n_s,\alpha^{n-1}_s)=b_2(s,\alpha^{n}_s)-b_2(s,\alpha^{n-1}_s)\,,\\
&\sigma(s,X^n_s,\alpha^{n}_s)-\sigma(s,X^n_s,\alpha^{n-1}_s)=\sigma_2(s,\alpha^{n}_s)-\sigma_2(s,\alpha^{n-1}_s)\,,\\
&D_x\mathcal{H}(s,X^n_s,Y^n_s,Z^n_s,\alpha^{n}_s)-D_x\mathcal{H}(s,X^n_s,Y^n_s,Z^n_s,\alpha^{n-1}_s)=0\,.
\end{split}
\end{equation*}
{\color{red}Let us consider the set $I_{\tau, h}$ given by~\eqref{eq def of I}. We will specify the choice of $\tau$ and $h$ later.}  Hence, after applying Lemma~\ref{lem estimate for difference of J} for $\alpha^n$ and $\alpha^{n-1}$ we have for some $C>0$
\begin{equation*}
\begin{split}
J&(x,\alpha^{n})-J(x,\alpha^{n-1})\\
\le & \E\int_{I_{\tau, h}}[\mathcal{H}(s,X^n_s,Y^n_s,Z^n_s,\alpha^{n}_s)-\mathcal{H}(s,X^n_s,Y^n_s,Z^n_s,\alpha^{n-1}_s)]\,ds\\
&+C\E\int_{I_{\tau, h}}|b_2(s,\alpha^{n}_s)-b_2(s,\alpha^{n-1}_s)|^2+|\sigma_2(s,\alpha^{n}_s)-\sigma_2(s,\alpha^{n-1}_s)|^2\,ds\\
&+ \E\int_{[0,T]\setminus I_{\tau, h}}[\mathcal{H}(s,X^n_s,Y^n_s,Z^n_s,\alpha^{n}_s)-\mathcal{H}(s,X^n_s,Y^n_s,Z^n_s,\alpha^{n-1}_s)]\,ds\\
&+C\E\int_{[0,T]\setminus I_{\tau, h}}|b_2(s,\alpha^{n}_s)-b_2(s,\alpha^{n-1}_s)|^2+|\sigma_2(s,\alpha^{n}_s)-\sigma_2(s,\alpha^{n-1}_s)|^2\,ds\,.
\end{split}
\end{equation*}
{\color{red}Since the following holds for all $s\in[0,T]$ and $\rho\ge0$:
\begin{equation*}
\begin{split}
\mathcal{H}&(s,X_s^{n},Y_s^{n},Z_s^{n},\alpha^{n}_s)-\mathcal{H}(s,X_s^{n},Y_s^{n},Z_s^{n},\alpha^{n-1}_s)\\
&+\frac{1}{2}\rho|b_2(s,\alpha^{n}_s)-b_2(s,\alpha^{n-1}_s)|^2+\frac{1}{2}\rho |\sigma_2(s,\alpha^{n}_s)-\sigma_2(s,\alpha^{n-1}_s)|^2\le 0 \,,
\end{split}
\end{equation*}
}
we have for $\rho\ge 2C$
\begin{equation*}
\begin{split}
J&(x,\alpha^{n})-J(x,\alpha^{n-1})\\
&\le  \E\int_{I_{\tau, h}}[\mathcal{H}(s,X^n_s,Y^n_s,Z^n_s,\alpha^{n}_s)-\mathcal{H}(s,X^n_s,Y^n_s,Z^n_s,\alpha^{n-1}_s)]\,ds\\
&\qquad+C\E\int_{I_{\tau, h}}|b_2(s,\alpha^{n}_s)-b_2(s,\alpha^{n-1}_s)|^2+|\sigma_2(s,\alpha^{n}_s)-\sigma_2(s,\alpha^{n-1}_s)|^2\,ds\,.
\end{split}
\end{equation*}
Therefore, from Lemma~\ref{lem delta H E tau h} and from similar calculations as in~\eqref{eq J(alpha n)-J(alpha n-1)},  {\color{red}there exists $\tau$ such that}
\begin{equation*}
\begin{split}
J&(x,\alpha^{n})-J(x,\alpha^{n-1})\\
&\le\left(1-\frac{2C}{\rho}\right)\E\int_{I_{\tau,  h}}[\mathcal{H}(s,X^n_s,Y^n_s,Z^n_s,\alpha^{n}_s)-\mathcal{H}(s,X^n_s,Y^n_s,Z^n_s,\alpha^{n-1}_s)]\,ds\\
&\le \left(1-\frac{2C}{\rho}\right)\frac{h\mu(\alpha^{n-1})}{T}\,.
\end{split}
\end{equation*}
Let us choose $h=-(\rho-2C)\mu(\alpha^{n-1})/(\rho T)$. Hence
\begin{equation}\label{eq diff of J n and n-1}
J(x,\alpha^{n})-J(x,\alpha^{n-1})\le -(\rho-2C)^2(\mu(\alpha^{n-1}))^2/(\rho^2 T^2).
\end{equation}
Let $\alpha^*$ be the optimal control. {\color{red} Indeed, by the sufficient condition for optimality, see e.g.~\cite{pham book}, and by assumptions of corollary, we have the existence of the optimal control.} Therefore, by convexity of $g$, and by It\^o's product rule we have
\begin{equation*}
\begin{split}
0\le & J(x,\alpha^{n-1})-J(x,\alpha^*)\\
= & \E\left[\int_0^T(f(s,X^{n}_s,\alpha^{n-1}_s)-f(s,X_s,\alpha_s^*))\,ds+g(X^{n}_T)-g(X_T)\right]\\
\le & \E\left[\int_0^T(f(s,X^{n}_s,\alpha^{n-1}_s)-f(s,X_s,\alpha_s^*))\,ds\right]+\E[(D_xg(X^{n}))^
\top(X^{n}_T-X_T)]\\
\le & \E\left[\int_0^T(f(s,X^{n}_s,\alpha^{n-1}_s)-f(s,X_s,\alpha_s^*))\,ds\right]\\
&\qquad+  \E\left[\int_0^T (Y^{n}_s)^\top d(X^{n}_s-X_s)+\int_0^T(X^{n}_s-X_s)^\top dY^{n}_s\right]\\
&\qquad+  \E\left[\int_0^T\text{tr}((\sigma(s,X^{n}_s,\alpha^{n-1}_s)-\sigma(s,X_s,\alpha_s^*))^\top Z^{n}_s)\,ds\right]\,.
\end{split}
\end{equation*}
Hence, we have that
\begin{equation*}
\begin{split}
0\le & J(x,\alpha^{n-1})-J(x,\alpha^*)
\le \E\left[\int_0^Tf(s,X^{n}_s,\alpha^{n-1}_s)-f(s,X_s,\alpha_s^*)\,ds\right]\\
&+\E\left[\int_0^T (Y^{n}_s)^\top (b(s,X^{n}_s,\alpha^{n-1}_s)-b(s,X_s,\alpha_s^*))\,ds\right]\\
&-\E\left[\int_0^T(X^{n}_s-X_s)^\top D_x\mathcal{H}(s,X^{n}_s,Y^{n}_s,Z^{n}_s,\alpha^{n-1}_s)\,ds\right]\\
&+\E\left[\int_0^T\text{tr}((\sigma(s,X^{n}_s,\alpha^{n-1}_s)-\sigma(s,X_s,\alpha_s^*))^\top Z^{n}_s)\,ds\right]\,.
\end{split}
\end{equation*}
Recalling the form of $b,\sigma$ and observing that
\begin{equation*}
D_x\mathcal{H}(s,X^{n}_s,Y^{n}_s,Z^{n}_s,\alpha^{n-1}_s)=b_1(s)Y^n_s+\sigma_1(s)Z_s^n+D_xf(s,X^n_s,\alpha^{n-1}_s)\,,
\end{equation*}
we have
\begin{equation*}
\begin{split}
0\le &J(x,\alpha^{n-1})-J(x,\alpha^*)
\le \E\left[\int_0^Tf(s,X^{n}_s,\alpha^{n-1}_s)-f(s,X_s,\alpha_s^*)\,ds\right]\\
&+\E\left[\int_0^T\text{tr}((\sigma_2(s,\alpha^{n-1}_s)-\sigma_2(s,\alpha_s^*))^\top Z^{n}_s)\,ds\right]\\
&+\E\left[\int_0^T (Y^{n}_s)^\top(b_2(s,\alpha^{n-1}_s)-b_2(s,\alpha_s^*))\,ds-\int_0^T(X^{n}_s-X_s)^\top D_xf(s,X^{n}_s,\alpha^{n-1}_s)\,ds\right]\,.
\end{split}
\end{equation*}
Since $f$ is convex in $x$ we have for all $s\in[0,T]$ that
\begin{equation*}
f(s,X_s,\alpha^{n-1}_s)\ge f(s,X_s^{n},\alpha_s^{n-1})+(X_s-X^{n}_s)^\top D_xf(s,X^{n}_s,\alpha^{n-1}_s)\,.
\end{equation*}
Therefore, we obtain
\begin{equation}\label{eq diff of J n-1 and alpha}
\begin{split}
J(&x,\alpha^{n-1})-J(x,\alpha^*)\\
&\le \E\int_0^T\left[\mathcal{H}(s,X^{n}_s, Y^{n}_s,Z^{n}_s,\alpha^{n-1}_s)-\mathcal{H}(s,X^{n}_s, Y^{n}_s,Z^{n}_s,\alpha_s^*)\right]\,ds\\
&\le-\mu(\alpha^{n-1})\,,
\end{split}
\end{equation}
where the second inequality holds due to 
\begin{equation*}
\mathcal{H}(s,X^{n}_s, Y^{n}_s,Z^{n}_s,\alpha^{n}_s)\le \mathcal{H}(s,X^{n}_s, Y^{n}_s,Z^{n}_s,\alpha_s^*)\,.
\end{equation*}
Let $b^n:=J(x,\alpha^n)-J(x,\alpha)$, then due to~\eqref{eq diff of J n and n-1} and~\eqref{eq diff of J n-1 and alpha} we have that
\begin{equation*}
b^{n}-b^{n-1}\le \frac{-(\rho-2C)^2\mu(\alpha^{n-1})^2}{(\rho^2T^2)}\le \frac{-(\rho-2C)^2(b^{n-1})^2}{\rho^2T^2}\,.
\end{equation*}
Therefore, due to Lemma~\ref{lem sequence b_k} we have
\begin{equation*}
J(x,\alpha^n)-J(x,\alpha^*)\le \frac{C_1}{n}\,.
\end{equation*}
 for some constant $C_1>0$. This concludes the proof.
\end{proof}

\appendix
\section{Auxiliary Lemma}\label{sec appendix}

\begin{lemma}\label{lem sequence b_k}
	Let $\{b_k\}_{k\in\mathbb{N}}$ be the sequence of nonnegative numbers such that
	\begin{equation*}
	b_{k+1}\le b_k-q b_k^2\,,
	\end{equation*}
	where $q$ is a positive constant. Then $b_k=O(1/k)$.
\end{lemma}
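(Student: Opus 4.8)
The plan is to pass to reciprocals and thereby convert the quadratic recurrence into a linear lower bound on $1/b_k$. First I would record the two elementary consequences of the hypothesis that make the argument run. Since $b_{k+1}\le b_k-q b_k^2\le b_k$, the sequence is non-increasing; and since $b_{k+1}\ge 0$ forces $b_k-q b_k^2\ge 0$, each $b_k$ is in fact bounded (one gets $b_k\le 1/q$). I would also dispose of the degenerate case at the outset: if $b_k=0$ for some $k$, then $0\le b_{k+1}\le b_k-q b_k^2=0$, so by induction $b_j=0$ for all $j\ge k$, and the conclusion $b_k=O(1/k)$ is then immediate. Hence it suffices to treat the case in which $b_k>0$ for every $k\in\N$, which is also what legitimises the divisions below.

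Assuming $b_k>0$ throughout, the key step is to estimate the increment of the reciprocal sequence. Using $b_k-b_{k+1}\ge q b_k^2$ from the hypothesis and then the monotonicity $b_{k+1}\le b_k$, one finds
\[
\frac{1}{b_{k+1}}-\frac{1}{b_k}=\frac{b_k-b_{k+1}}{b_k b_{k+1}}\ge\frac{q b_k^2}{b_k b_{k+1}}=\frac{q b_k}{b_{k+1}}\ge q\,.
\]
This is a telescoping inequality, so summing it from the first index up to $n-1$ yields $1/b_n\ge 1/b_1+q(n-1)\ge q(n-1)$, whence
\[
b_n\le\frac{1}{q(n-1)}\qquad\text{for }n\ge 2\,,
\]
which is precisely the asserted $O(1/k)$ decay, with implicit constant depending only on $q$.

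I do not expect a genuine obstacle here: the whole difficulty is bookkeeping. The single point that requires care is the possible vanishing of $b_k$, since the reciprocal estimate is meaningless when a denominator is zero; this is exactly why I would handle the ``$b_k=0$ eventually'' case separately before carrying out the main computation. A final routine check is that the two inequalities in the displayed chain are valid in the stated order (the first from the recurrence, the second from monotonicity), after which the telescoping sum and the resulting explicit bound require nothing further.
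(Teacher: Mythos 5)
Your proof is correct, and it takes a genuinely different route from the paper's. The paper substitutes $b_k=c_k/k$ and shows by a case analysis on the sign of $1-qc_k$ that the sequence $c_k$ stays bounded by $\max\{c_1,1/q\}$, which yields $b_k\le \max\{b_1,1/q\}/k$. You instead pass to reciprocals: from $b_k-b_{k+1}\ge qb_k^2$ and monotonicity you get $1/b_{k+1}-1/b_k\ge q$, telescope, and conclude $b_n\le 1/(q(n-1))$. Both arguments are elementary and give the same order of constant (note $b_1\le 1/q$ anyway, so the paper's bound is also essentially $1/(qk)$); yours is the more standard textbook device and arrives at an explicit bound in one line of telescoping, while the paper's avoids any division by $b_k$ at the cost of a slightly more opaque algebraic manipulation. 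Your explicit treatment of the degenerate case $b_k=0$ is a point of care that the paper in fact glosses over (its manipulation implicitly divides by $c_k$), so your write-up is, if anything, the more complete of the two. All the individual steps check out: the monotonicity $b_{k+1}\le b_k$, the chain $\frac{1}{b_{k+1}}-\frac{1}{b_k}=\frac{b_k-b_{k+1}}{b_kb_{k+1}}\ge \frac{qb_k}{b_{k+1}}\ge q$, and the telescoping sum are all valid in the order you state them.
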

One can find the proof in~\cite[Lemma 1.4, p. 93]{demyanov book}. However, the proof is written in Russian. For convenience of the reader we provide it here.
\begin{proof}
	Let $b_k=\frac{c_k}{k}$ for some nonnegative sequence $(c_k)_{k\in\mathbb{N}}$. Then it is enough to show that $c_k$ is bounded for all $k\in\mathbb{N}$. By assumption we have
	\begin{equation*}
	b_k-b_{k+1}=\frac{c_k}{k}-\frac{c_{k+1}}{k+1}=\frac{c_k}{k}\left(1-\frac{c_{k+1}}{c_k}\frac{k}{k+1}\right)\ge q\frac{c_k^2}{k^2}\,.
	\end{equation*}
	Therefore,
	\begin{equation*}
	1-\frac{c_{k+1}}{c_k}\frac{k}{k+1}\ge q\frac{c_k}{k}\,.
	\end{equation*}
	After some transformation, we can rewrite the equation above as
	\begin{equation*}
	\left(1+\frac{1}{k}\right)\left(1-q\frac{c_k}{k}\right)\ge \frac{c_{k+1}}{c_k}\,.
	\end{equation*}
	Thus
	\begin{equation*}
	1+\frac{1}{k}(1-qc_k)-q\frac{c_k}{k^2}\ge \frac{c_{k+1}}{c_k}\,.
	\end{equation*}
	If $1-qc_k<0$ we have
	\begin{equation*}
	1>1+\frac{1}{k}(1-qc_k)-q\frac{c_k}{k^2}\ge \frac{c_{k+1}}{c_k}\,.
	\end{equation*}
	Hence $c_{k+1}<c_k$. On the other hand, if $1-qc_k\ge0$, we have $c_k\le\frac{1}{q}$. Therefore, we conclude that for all $k$ we have
	\begin{equation*}
	c_k\le \max\left\{c_1,\frac{1}{q}\right\}\,.
	\end{equation*}
\end{proof}


\begin{thebibliography}{10}

\bibitem{Bellman}
R. Bellman,  Functional equations in the theory of dynamic programming. v. positivity and quasi-linearity, {\em Proc. Natl. Acad. Sci. U.S.A.}, 41(10):743--746, 1955.

\bibitem{Bellman:1957}
R. Bellman,  {\em Dynamic Programming}, Princeton University Press, Princeton, NJ, USA, 1957.
	
\bibitem{howard:dp}
R. A. Howard, {\em Dynamic Programming and Markov Processes}. MIT Press, Cambridge, MA, 1960.

\bibitem{boltyanskii}
V. G. Boltyanskii, R. V. Gamkrelidze, and L. S. Pontryagin, Theory of optimal processes. I. The maximum principle, {\em Izv. Akad. Nauk SSSR Ser. Mat.}, 24(1): 3--42, 1960.

\bibitem{chernousko}
I. A. Krylov and F. L. Chernousko, On the method of successive approximations for solution of optimal control problems (in Russian), {\em U.S.S.R. Comput. Math. Math. Phys.}, 2:6, 1371--1382, 1963.

\bibitem{demyanov book}
V. D. Demyanov and A. M. Rubinov, {\em Approximate Methods in Extremal Problems} (in Russian), Leningrad, 1968.

\bibitem{pontryagin}
L. S. Pontryagin, {\em Mathematical Theory of Optimal Processes}, CRC Press, 1987.
	
\bibitem{krylov controlled}
N. V. Krylov, {\em Controlled diffusion processes}, Springer, 1980.

\bibitem{antonelli}
F. Antonelli, Backward-forward stochastic differential equations, {\em Ann. Appl. Probab.}, 3:777--793, 1993.

\bibitem{ma}
J. Ma, P. Protter, and J. M. Yong, Solving forward-backward stochastic differential equations explicitly -- a four step scheme, {\em Probab. Th. Rel. Fields}, 98:339--359, 1994.

\bibitem{hu}
Y. Hu and S. Peng, Solution of forward-backward stochastic differential equations, {\em Probab. Th. Rel. Fields}, 103:273--283, 1995.

\bibitem{douglas}
J. Douglas, J. Ma, and P. Protter, Numerical methods for forward-backward stochastic differential equations,
{\em Ann. Appl. Probab.}, 6(3):940--968, 1996.

\bibitem{elkaroui peng quenez backward}
N. El Karoui and S. Peng and M. C. Quenez, Backward Stochastic Differential Equations in Finance, {\em Math. Finance}, 7(1):1--71, 1997.

\bibitem{yong}
J. Yong, Finding adapted solutions of forward-backward stochastic differential equations: Method of continuation, {\em Probab. Th. Rel. Fields}, 107:537--572, 1997.

\bibitem{pardoux}
E. Pardoux and S. Tang, Forward-backward stochastic differential equations and quasilinear parabolic PDEs, {\em Probab. Th. Rel. Fields},  114(2):123--150,  1999.

\bibitem{peng}
S. Peng and Z. Wu, Fully coupled forward-backward stochastic differential
equations and applications to optimal control, {\em SIAM J. Control Optim.} 37:825--843, 1999.




\bibitem{delarue}
F. Delarue, On the existence and uniqueness of solutions to FBSDEs in a non-degenerate case, {\em Stochastic Process. Appl.}, 99:209--289, 2002.

\bibitem{menozzi}
F. Delarue and S. Menozzi, A forward-backward stochastic algorithm for quasi-linear PDEs, {\em Ann. Appl. Probab.}, 16(1):140--184, 2006.


\bibitem{milstein}
G. Milstein and M. Tretyakov, Discretization of forward-backward stochastic differential equations and related quasi-linear parabolic equations, {\em IMA J. Numer. Anal.}, 27(1):24--44, 2007.

\bibitem{dong krylov rate}
H. Dong and N. V. Krylov, The rate of convergence of finite-difference 
approximations for parabolic Bellman equations with Lipschitz coefficients in 
cylindrical domains, {\em Appl. Math. Optim.}, 56(1):37--66, 2007.

\bibitem{shen}
J. Ma, J. Shen, and Y. Zhao, On numerical approximations of forward-backward stochastic differential equations, {\em SIAM J. Numer. Anal.}, 46(5):2636--2661, 2008.

\bibitem{gyongy siska finite}
I. Gy\"ongy and D. \v{S}iska, On Finite-Difference Approximations for Normalized Bellman Equations, {\em Appl. Math. Optim.}, 60:297--339, 2009.

\bibitem{pham book}
H. Pham, {\em Continuous-time stochastic control and optimization with financial applications}, Springer, 2009.

\bibitem{guo}
W. Guo, J. Zhang, and J. Zhuo, A monotone scheme for high-dimensional fully nonlinear PDEs, {\em Ann. Appl. Probab.}, 25(3):1540--1580, 2015.

\bibitem{carmona book}
R. Carmona, {\em Lectures on BSDEs, Stochastic Control, and Stochastic Differential Games with Financial Applications}, SIAM, 2016.

\bibitem{zhang book}
J. Zhang, {\em Backward Stochastic Differential Equations: From Linear to Fully Nonlinear Theory}, Springer, New York, 2017.


\bibitem{jacka miatovic policy}
S. D. Jacka and A. Mijatovi\'c, On the policy improvement algorithm in continuous time, {\em Stochastics}, 89(1):348--359, 2017.

\bibitem{jacka miatovic siraj coupling}
S. D. Jacka, A. Mijatovi\'c, and D. Siraj, Coupling and a generalised Policy Iteration Algorithm in continuous time, {\em arXiv}:1707.07834, 2017.

	
\bibitem{maeda jacka evaluation}
J. Maeda and S. D. Jacka, Evaluation of the Rate of Convergence in the PIA,
	{\em arXiv}:1709.06466, 2017.
	
\bibitem{sabate}
M. Sabate Vidales, D. \v{S}i\v{s}ka, and \L. Szpruch, Unbiased deep solvers for parametric PDEs, {\em arXiv}:1810.05094v2, 2018.


\bibitem{han}
J. Han, A. Jentzen, and W. E, Solving high-dimensional partial differential equations using deep learning, {\em Proc. Natl. Acad. Sci. U.S.A.}, 115(34):8505--8510, 2018.

\bibitem{weinan e}
Q. Li, L. Chen, C.Tai, and W. E, Maximum principle based algorithms for deep learning, {\em J. Mach. Learn. Res.}, 18(165), 1--29, 2018.

\bibitem{richou}
J. Harter and A. Richou, A stability approach for solving multidimensional quadratic BSDEs, {\em Electron. J. Probab.}, 24(4), 1--51, 2019.


\bibitem{han long}
J. Han and J. Long, Convergence of the deep BSDE method for coupled FBSDEs, {\em Probab. Uncertain. Quant. Risk.}, 5(1), 1--33, 2020.

\bibitem{shaolin}
S. Ji, S. Peng, Y. Peng, and X. Zhang, Three algorithms for solving high-dimensional fully-coupled FBSDEs through deep learning, {\em IEEE Intell. Syst.}, 35(3):71--84, 2020.

\bibitem{kerimkulov siska szpruch}
B. Kerimkulov, D. \v{S}i\v{s}ka, and \L. Szpruch. Exponential convergence and stability of Howard's policy improvement algorithm for controlled diffusions, {\em SIAM J. Control Optim.}, 58(3), 1314--1340, 2020.

\bibitem{siska szpruch}
D. \v{S}i\v{s}ka and \L. Szpruch,  Gradient Flows for Regularized Stochastic Control Problems, {\em arXiv}:2006.05956, 2020.


\end{thebibliography}
\end{document}